\newtheorem{lemma}{Lemma}[section]
\newtheorem{teo}[lemma]{Theorem}
\newtheorem{prop}[lemma]{Proposition}
\theoremstyle{definition}
\newtheorem{defn}[lemma]{Definition}
\newtheorem{example}[lemma]{Example}
\theoremstyle{remark}
\newtheorem{rem}[lemma]{Remark}
\newcommand{\calX} {\ensuremath {\mathcal{X}}}
\newcommand{\calN} {\ensuremath {\mathcal{N}}}
\newcommand{\calM} {\ensuremath {\mathcal{M}}}
\newcommand{\calC} {\ensuremath {\mathcal{C}}}
\newcommand{\calA} {\ensuremath {\mathcal{A}}}
\newcommand{\calD} {\ensuremath {\mathcal{D}}}
\newcommand{\calF} {\ensuremath {\mathcal{F}}}
\newcommand{\calP} {\ensuremath {\mathcal{P}}}
\newcommand{\calY}{\ensuremath {\mathcal{Y}}}
\newcommand{\calH}{\ensuremath {\mathcal{H}}}
\newcommand{\calG}{\ensuremath {\mathcal{G}}}
\begin{document}
\title{Hyperbolic $4$-manifolds, colourings and mutations}
\author{Alexander Kolpakov \& Leone Slavich}
\date{}
\maketitle

\selectlanguage{french} 
\begin{abstract}
\noindent Nous proposons une nouvelle approche \`a construire des vari\'et\'es hyperboliques $\calM$ en dimension quatre, par moyens d'un poly\`edre de Coxeter $\calP \subset \mathbb{H}^4$ munit avec un coloriage de ses faces. Aussi, nous utilisons notre m\'ethode pour obtenir des sous-surfaces totalement g\'eod\'esiques plong\'ees dans $\calM$, et pour d\'ecrire le r\'esultat de mutations par rapport \`a ses surfaces. Comme application, nous construisons une vari\'et\'e compl\`ete hyperbolique non-compacte $\calX$ avec un bout cuspide non-torique, et une vari\'et\'e compl\`ete hyperbolique non-compacte $\calY$ avec un bout cuspide torique. Elles sont des nouveaux exemples de vari\'et\'es hyperboliques compl\`etes non-compactes en dimension quatre avec un seul bout cuspide et volume relativement petit.  
\end{abstract}

\selectlanguage{english}

\begin{abstract}
\noindent We develop a way of seeing a complete orientable hyperbolic $4$-manifold $\calM$ as an orbifold cover of a Coxeter polytope $\calP \subset \mathbb{H}^4$ that has a facet colouring. We also develop a way of finding a totally geodesic sub-manifold $\calN$ in $\calM$, and describing the result of mutations along $\calN$. As an application of our method, we construct an example of a complete orientable hyperbolic $4$-manifold $\calX$ with a single non-toric cusp and a complete orientable hyperbolic $4$-manifold $\calY$ with a single toric cusp. Both $\calX$ and $\calY$ have twice the minimal volume among all complete orientable hyperbolic $4$-manifolds.
\end{abstract}

\section{Introduction}
By Margulis' Lemma, a finite-volume complete hyperbolic $n$-manifold $M^n$ has a finite number of ends called \emph{cusps}, each of which is homeomorphic to $N^{n-1} \times [0, +\infty)$ for some closed Euclidean $(n-1)$-manifold $N^{n-1}$. In dimension three many knot complements provide an example of a hyperbolic manifold with a single cusp \cite{Thurston}. In this setting the resulting manifolds are necessarily orientable, and since the torus $T^2$ is the only example of a closed Euclidean surface, the cusp is homeomorphic to $T^2 \times [0, +\infty)$.

In higher dimensions, where there is no direct analogue to geometrisation, constructing hyperbolic manifolds is a harder task. The goal is usually achieved by using either arithmetic methods, or producing direct constructions based on Poincar\'{e}'s fundamental polytope theorem.

It is also known that the cusp section of a single-cusped hyperbolic $4$-manifold cannot have any possible homeomorphism type of a closed Euclidean $3$-manifold \cite{LR}. In fact, a necessary condition for such a manifold to be the cusp section of a single-cusped hyperbolic $4$-manifold is that its $\eta$-invariant is an integer, and such a condition is met by only four out of the six possible orientable homeomorphism types.
The first example of a single-cusped hyperbolic $4$-manifold whose cusp section is a $3$-torus was given in \cite{KM}. To the best of our knowledge, no example of single cusped hyperbolic $4$-manifold with cusp section non-homeomorphic to the $3$-torus has been constructed before. In this paper, we provide an example of such an object (Theorem \ref{teo:nontoric}). The cusp section of our example is homeomorphic to the \emph{unique} Euclidean orientable $\mathbb{S}^1$-bundle over the Klein bottle $K$ (see \cite{Scott}).
 
It would also be interesting to generalise to dimension four the well-known $3$-dimensional examples of hyperbolic knot complements. A reasonable model would be the complement of a $2$-dimensional torus $T$ in $\mathbb{S}^4$. The boundary $\partial(U(T))$ of a regular neighbourhood $U(T)$ of $T$ will necessarily be homeomorphic to the $3$-torus $T^3$. Therefore $\partial(U(T))$ will admit a Euclidean structure, and we can hope to find a complete, finite volume hyperbolic structure on the complement of $S$ where $\partial(U(T))$ corresponds to a cross-section of the cusp.

However we have to be careful: since a manifold $M$ of dimension $n\geq4$ can admit a variety of non-equivalent smooth structures, there is a sharp difference in saying that $M$ is \emph{homeomorphic} or \emph{diffeomorphic} to some preferred model of a smooth manifold.
First examples of hyperbolic $4$-manifolds which are homeomorphic to complements of collections of $2$-dimensional tori in $\mathbb{S}^4$ were given in \cite{IRT}. All the examples constructed there have more than one cusp, and each of these cusps is homeomorphic to $T^3\times [0, +\infty)$, where $T^3$ denotes the three-dimensional torus. In \cite{S}, one of these manifolds (equipped with the smooth structure induced by its hyperbolic metric) is shown to be diffeomorphic to the complement of a collection of five tori in $\mathbb{S}^4$ with its standard smooth structure.

For a hyperbolic manifold $M$, being the complement of a collection of tori or Klein bottles in $\mathbb{S}^4$ implies strong restrictions on the geometry and topology of $M$ itself. A classical result due to H.C.~Wang states that, in all dimensions $n\geq 2$, the volumes of finite-volume hyperbolic $n$-manifolds form a well-ordered subset of $\mathbb{R}$, which is discrete if $n\neq 3$ \cite[Theorem 3.1]{Vinberg}. Moreover, in even dimensions, the volume of a hyperbolic $n$-manifold $\calM$ turns out to be proportional to the Euler characteristic $\chi(\calM)$ through the Gau\ss--Bonnet formula (\cite{Serre}, see also \cite{Heckman, Zehrt}) $$\text{Vol}\,M= c_n\cdot \chi(M),\;\,\,\text{with}\; c_n= (-2\pi)^{\frac{n}{2}}/(n-1)!!.$$ 

The Euler characteristic of a four-dimensional hyperbolic manifold $M$ (in general, non-compact, though finite-volume) can take any positive integer value \cite{RT}. However, if such a manifold is the complement of a collection of tori in $\mathbb{S}^4$, an easy excision property argument shows that necessarily $\chi(M)=\chi(\mathbb{S}^4)=2$, and therefore $\text{Vol}\;M=8\pi^2/3$.

In this setting, combining several properties of a manifold in a single example may become a difficult task (e.g. we want to construct a complete hyperbolic finite-volume $4$-manifold with a single cusp, and a small volume). In the present paper we use the technique of colourings (see \cite{DJ, GS, KMT, Vesnin87, Vesnin}) in order to produce examples of single-cusped hyperbolic manifolds of Euler characteristic $2$. The main results are the following:

\begin{teo}\label{teo:nontoric}
There exists an orientable, complete, finite-volume hyperbolic $4$-manifold $\calX$ such that $\chi(\calX)=2$ with a single cusp whose cross-section is homeomorphic to the Euclidean orientable $S^{1}$-bundle over the Klein bottle.
\end{teo}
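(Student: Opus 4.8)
\emph{Proof strategy.} The plan is to construct $\calX$ explicitly from the colouring machinery developed above, taking for $\calP$ the ideal right-angled $24$-cell in $\mathbb{H}^4$, whose $24$ octahedral facets I will colour by hand. Recall that this polytope has $\Vol(\calP) = 4\pi^2/3 = c_4$; I will choose the colouring $\lambda$ so that the associated cover $\calX$ has volume $8\pi^2/3 = 2\,\Vol(\calP)$, whence $\chi(\calX)=2$ by Gauss--Bonnet --- twice the minimal volume of an orientable cusped hyperbolic $4$-manifold. Concretely, I would exhibit such a $\lambda$, pass to the manifold $\calX$ it determines, and then verify the four required properties (manifold, orientable, $\chi=2$, single non-toric cusp) in turn.

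First, manifoldness and orientability. By the criterion from the colouring construction, $\calX$ is a manifold precisely when $\lambda$ is proper, i.e. the facets meeting along any compact face of $\calP$ receive linearly independent colours; since the only faces of $\calP$ reaching infinity are its vertices, this need only be checked up to codimension three. Orientability is then a further linear condition on $\lambda$, which I would impose from the outset. Together with the volume computation above, this makes $\calX$ a complete, orientable, finite-volume hyperbolic $4$-manifold with $\chi(\calX)=2$.

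The crux is the cusp analysis. The ideal vertices of $\calP$ are its only points at infinity, and under $\lambda$ they assemble into the cusps of $\calX$. I would first show, by tracking the $\lambda$-orbits of these vertices through the copies of $\calP$, that they all fuse into a \emph{single} cusp. It then remains to identify the cross-section. Each ideal vertex of the right-angled $24$-cell has a cubical Euclidean link, and $\lambda$ restricted to the facets through that vertex prescribes how these cubes glue into a closed flat $3$-manifold $N$. To pin down $N$ I would compute its linear holonomy inside $\Iso(\matE^3)$ (equivalently, compute $H_1(N)$), and check that it is the $\matZ/2$-holonomy Bieberbach manifold --- the orientable $S^1$-bundle over the Klein bottle, rather than the $3$-torus (see \cite{Scott}).

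The main obstacle is that these requirements pull against one another: forcing the vertex links to the non-toric flat manifold while simultaneously fusing all cusps into one, keeping $\chi=2$, and preserving orientability leaves very little freedom in the choice of $\lambda$. I expect this step to require an explicit, carefully tuned colouring together with a direct (and likely finite, computer-assisted) verification of the vertex-link holonomies. As an independent route --- and as the natural bridge to the toric-cusped example --- the mutation technology of the preceding section should also produce $\calX$: starting from a model of the same volume with a $T^3$ cross-section, one cuts along a suitable totally geodesic $3$-submanifold $\calN$ meeting the cusp and reglues by an isometry of its $T^3$ section inducing the half-turn identification, thereby converting the torus cross-section into the Klein-bottle bundle while leaving the volume, and hence $\chi$, unchanged.
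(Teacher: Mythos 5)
There is a fatal arithmetic gap at the very first step: the ideal right-angled $24$-cell cannot produce a manifold with $\chi=2$ via a colouring. Since $\Vol(\calP)=4\pi^2/3=c_4$, a colouring cover $\calM_\lambda\rightarrow\calP$ of degree $|{\rm im}\,\lambda|=2^{\dim}$ has $\chi(\calM_\lambda)=2^{\dim}$, so $\chi=2$ would force the colours to span a one-dimensional space; but the $24$-cell is simple at edges with all vertices ideal, and properness at the ideal edges requires three linearly independent colours around each edge, hence $\dim V\geq 3$ and $\chi(\calM_\lambda)\geq 8$. This is precisely why the paper does \emph{not} use the $24$-cell: it uses the Potyaga\u{\i}lo--Vinberg polytope $P^4$, whose volume is $(1/16)\cdot 4\pi^2/3$, so that the $32$-fold cover $\calM$ determined by a symmetric $(\matZ/2\matZ)^5$-colouring has volume $8\pi^2/3$ and $\chi=2$. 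Your proposed volume bookkeeping therefore cannot be repaired within your choice of polytope.

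Your fallback via mutation is much closer to what the paper actually does, but as sketched it is circular and imprecise. The paper's $\calM$ has \emph{ten} toric cusps, not one; the mutation consists of cutting along the two disjoint totally geodesic hypersurfaces coming from a pair of non-adjacent facets of $P^4$ and regluing by carefully chosen isometries $\phi_1,\phi_2$. The single crucial point --- which your sketch omits --- is that these regluings must simultaneously (i) chain all twenty cusp pieces of the cut-open manifold into one cycle, yielding a single cusp, and (ii) produce a hyper-elliptic involution as the monodromy of the resulting torus bundle over $\mathbb{S}^1$, which is what makes the cross-section the orientable $S^1$-bundle over the Klein bottle rather than $T^3$ (by contrast, a different choice of $\phi_2$ gives trivial monodromy and the toric example $\calY$). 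Starting instead from an already single-cusped $T^3$ model and ``regluing by a half-turn'' both presupposes Theorem \ref{teo:toric} and does not explain why the cut hypersurface meets the cusp in a way that realises the involution on the fibre. Your direct route (forcing non-toric cubical vertex links via Proposition \ref{prop:cube-colourings-mflds}) is a legitimate idea in principle, but you correctly sense it conflicts with fusing the cusps; the paper sidesteps this entirely by keeping all cusps of $\calM$ toric and letting the mutation create the non-toric topology.
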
 

\begin{teo}\label{teo:toric}
There exists an orientable, complete, finite-volume hyperbolic $4$-manifold $\calY$ such that $\chi(\calY)=2$ with a single cusp whose cross-section is homeomorphic to a $3$-torus.
\end{teo}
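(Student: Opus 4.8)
\noindent The plan is to produce $\calY$ by the colouring machinery established above, realising it as the orbifold cover of a Coxeter polytope $\calP \subset \mathbb{H}^4$ determined by a facet colouring $\lambda$ valued in some $(\matZ/2)^k$. By Gau\ss--Bonnet, demanding $\chi(\calY)=2$ is the same as demanding $\Vol(\calY)=8\pi^2/3$, i.e.\ twice the minimal volume of an orientable cusped hyperbolic $4$-manifold, so the degree of the cover (equivalently $|\mathrm{im}\,\lambda|$) must be tuned so that $\chi(\calY)=\deg\cdot\chi^{\,\mathrm{orb}}(\calP)=2$. Concretely I would choose $\lambda$ subject to four requirements: it is \emph{proper}, so that the colours of the facets meeting along any face of $\calP$ are linearly independent over $\matZ/2$ and the cover is a manifold; $\mathrm{im}\,\lambda$ has the size forcing $\chi(\calY)=2$; $\lambda$ satisfies the orientability criterion of the colouring lemma, which I would secure by reserving one coordinate recording the parity of reflection length, so that $\ker\lambda$ lands in the orientation-preserving subgroup; and finally the cusp condition discussed next.

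\noindent The delicate point is the cusp. Because a finite cover can never decrease the number of cusps, a single-cusped $\calY$ forces $\calP$ itself to be single-cusped, so I take $\calP$ to have a single ideal vertex with peripheral subgroup $\Pi$. The number of cusps of $\calY$ then equals the index $[\,\mathrm{im}\,\lambda : \lambda(\Pi)\,]$, so $\calY$ will have exactly one cusp precisely when $\lambda|_\Pi$ is surjective onto $\mathrm{im}\,\lambda$. The cross-section of that cusp is the flat $3$-manifold covering the Euclidean link $L$ of the ideal vertex — a $3$-dimensional Euclidean Coxeter orbifold — via the cover induced by $\lambda|_\Pi$. I would therefore compute the flat structure that $\lambda$ forces on $L$ and arrange its holonomy to be trivial, which pins the cross-section down as the $3$-torus $T^3$ rather than any of the other five orientable flat $3$-manifolds.

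\noindent I expect the main obstacle to be the simultaneous fulfilment of all four requirements inside a colour space small enough to keep $\chi(\calY)=2$: surjectivity of $\lambda|_\Pi$ (single cusp) and triviality of the induced holonomy (toric section) compete against the scarcity of colours dictated by the volume bound and against properness, so the search for $\lambda$ reduces to a tight finite combinatorial problem. I would resolve it by exhibiting one explicit colouring — located, if necessary, through a computer-assisted enumeration — and checking the conditions by hand; given such a $\lambda$, orientability and the count $\chi(\calY)=2$ are immediate, and the real work is the identification of the cusp section, carried out by writing down the holonomy representation of the flat metric on $L$ and verifying that it is trivial. A complementary route that fits the mutation theme of the paper is to obtain $\calY$ from the manifold $\calX$ of Theorem \ref{teo:nontoric} by a mutation along a totally geodesic hypersurface $\calN$ meeting the cusp: since a mutation changes neither $\Vol$ nor $\chi$ nor orientability, it would convert the twisted cross-section of $\calX$ into $T^3$ while preserving everything else.
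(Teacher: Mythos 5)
There is a genuine gap, and it sits exactly where you locate the ``delicate point.'' Your primary route requires a right-angled polytope $\calP\subset\mathbb{H}^4$ with a \emph{single} ideal vertex, orbifold Euler characteristic $2/2^k$, and a proper orientable colouring whose restriction to the peripheral subgroup surjects onto $\mathrm{im}\,\lambda$. You correctly observe that a single-cusped cover forces a single ideal vertex on $\calP$, but you never produce such a polytope, and this existence question is the entire difficulty --- not the ``tight finite combinatorial problem'' of searching colourings afterwards. The paper's chosen polytope $P^4$ (Potyaga\u{\i}lo--Vinberg) has \emph{five} ideal vertices, so by your own observation no colouring of it can yield fewer than five cusps (the symmetric colouring actually gives ten, all toric); and no small-covolume right-angled hyperbolic $4$-polytope with one ideal vertex is known. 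The paper circumvents this obstruction precisely by abandoning the ``colouring alone'' strategy: it builds a ten-cusped manifold $\calM$ of Euler characteristic $2$ from $P^4$ and then merges all the cusps by mutations.

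Your complementary route is the one the paper actually takes (up to the cosmetic difference that both $\calX$ and $\calY$ are produced directly from $\calM$ by different regluings along the same pair of disjoint hypersurfaces $\calH_1,\calH_2$, rather than $\calY$ being a further mutation of $\calX$), but as stated it asserts the conclusion rather than proving it. A mutation preserves volume, $\chi$ and orientability, but it does \emph{not} preserve ``everything else'': the number of cusps and the homeomorphism type of the cusp cross-sections depend sensitively on the chosen regluing isometries --- indeed the whole point of the paper is that one choice gives the non-toric cusp of $\calX$ and another gives the toric cusp of $\calY$. The two verifications you omit are the substance of the proof: (i) that the twenty pieces $T\times[0,1]$ into which the cusp sections of $\calM$ are cut reassemble into a \emph{single} cycle, so that $\calY$ has exactly one cusp; and (ii) that the monodromy of the resulting torus bundle over $\mathbb{S}^1$, computed in the paper as an explicit commutator $\psi_2^{-1}\circ\psi_1^{-1}\circ\psi_2\circ\psi_1$ of affine maps on the developing graph, is the identity, which is what identifies the cross-section as $T^3$ rather than the $\mathcal{G}_2$-manifold obtained for $\calX$. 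Without exhibiting the gluing maps and carrying out this computation, the toric conclusion does not follow.
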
 

We point out the fact that the examples constructed in this work have minimal volume amongst known examples of orientable, single cusped, hyperbolic $4$-manifolds.
Finally, we point out that the techniques introduced in this paper allow us to produce many non-compact hyperbolic $4$-manifolds of Euler characteristic $2$. It is reasonable to wonder if some of these are new examples of complements of tori and/or Klein bottles in $\mathbb{S}^4$.

Also, our examples of hyperbolic manifolds with toric cusps may be used to produce new instances of Einstein manifolds, c.f. \cite{Bamler}, or integer homology spheres with non-vanishing simplicial volume, c.f. \cite{FM, RT2005}.

The paper is organised as follows: in Section \ref{section:colourings-general}, we recall basic properties of colourings and prove some auxiliary statements. Then, in Section \ref{section:colourings-cube}, we classify all possible colouring of a cube, up to homeomorphism of the resulting manifolds. In Section \ref{section:colourings-Potyagailo-Vinberg} we study the properties of colourings of a particular polytope: the $4$-dimensional Potyaga\u{\i}lo-Vinberg polytope first introduced in \cite{PV}. In Section \ref{section:symmetric-example} we produce a colouring of $P^4$ that we use in order to construct our examples. Finally, in Section \ref{section:mutations}, we use mutations along $3$-dimensional hyper-surfaces in $4$-dimensional multi-cusped hyperbolic $4$-manifolds to construct single-cusped hyperbolic $4$-manifolds with toric and non-toric cusp sections. 

\section{Properties of colourings}\label{section:colourings-general}

In this section we review the construction of manifolds from colourings of right-angled polytopes, with particular attention to the case of non-compact hyperbolic polytopes. We also discuss the relations between the algebraic properties of the colouring and the topological properties of the resulting manifold.

Let $\mathbb{X}^n$ with $\mathbb{X} = \mathbb{S}, \mathbb{E}$, or $\mathbb{H}$ denote, respectively, the $n$-sphere, $n$-dimensional Euclidean space, or $n$-dimensional hyperbolic (or Lobachevski\u{\i}) space. Let $P \subset \mathbb{X}^n$ be a convex polytope, and let $\calF(P)$ be the set of its co-dimension one faces (or \textit{facets}). A polytope $P \subset \mathbb{X}^n$ is called simple if each of its vertices belongs to exactly $n$ facets.

A colouring of a simple polytope $P$ according to \cite{DJ, GS, Vesnin87, Vesnin} is a map $\lambda: \calF(P) \rightarrow (\mathbb{Z}/2\mathbb{Z})^n$. A colouring is called \textit{proper} if the colours of facets around each vertex of $P$ are linearly independent vectors of $V = (\mathbb{Z}/2\mathbb{Z})^n$. 

One of most remarkable cases is a colouring of a compact right-angled polytope $P\subset \mathbb{H}^n$, with a proper colouring\footnote{by a result of L. Potyaga\u{\i}lo and \`E. Vinberg \cite{PV}, $2 \leq n \leq 4$}. Polytopes of this type give rise to interesting families of hyperbolic manifolds \cite{GS, KMT, Vesnin87, Vesnin}. 

In \cite{KMT} the notion of a colouring is extended to let $V$ be any finite-dimensional vector space over $\mathbb{Z}/2\mathbb{Z}$. Below we shall use the notion of colouring in a wider context of right-angled polytopes $P$, which are not simple. A polytope $P \subset \mathbb{X}^n$ is called \textit{simple at edges} if each edge belongs to exactly $(n-1)$ facets. In the case of a right-angled polytope $P \subset \mathbb{H}^4$, this means that vertex figures of $P$ are either $3$-dimensional tetrahedra or cubes \cite[Proposition~1]{D}. 

A colouring of a polytope $P\subset \mathbb{X}^n$ which is simple at edges is a map $\lambda: \calF(P) \rightarrow V$, where $V$ is a finite-dimensional vector space over $\mathbb{Z}/2\mathbb{Z}$. A colouring $\lambda$ is \textit{proper} if the following two conditions are satisfied:
\begin{enumerate}
\item \textit{Properness at vertices:} if $v$ is a simple vertex of $P$, then the $n$ colours of facets around it are linearly independent vectors of $V$;
\item \textit{Properness at edges:} if $e$ is an ideal edge (i.e. an edge having an ideal vertex, or both ideal vertices) of $P$, then the $(n-1)$ colours of facets around $e$ are linearly independent. 
\end{enumerate}

Let $P$ be a right-angled polytope in $\mathbb{X}^n$, with $\mathbb{X} = \mathbb{S}$, $\mathbb{E}$, or $\mathbb{H}$. In the latter case, we allow some (not necessarily all) of the vertices of $P$ to lie on the ideal boundary\footnote{by a result of G.~Dufour \cite{D} right-angled polytopes exist in $\mathbb{H}^n$ for $2 \leq n \leq 12$. Examples are known up to dimension $8$ \cite{PV}}. Let $\lambda:\calF(P)\rightarrow V$ be a colouring on the facets of $P$, with values in a finite-dimensional vector space $V$ over $\mathbb{Z}/2\mathbb{Z}$, which satisfies all the properness conditions.

The colouring defines a homomorphism from $G$ to $V$ which we continue to denote by $\lambda$, where $G=G(P)$ is the right-angled Coxeter group generated by reflections in the facets of $P$. The following key result holds:

\begin{prop}
Let $\lambda$ be a coloring of a right-angled polytope which satisfies all the properness conditions at hyperbolic vertices and at the ideal edges. The kernel of the associated homomorphism $\lambda:G\rightarrow V$ is a torsion-free subgroup of $G$. To this subgroup, there corresponds an orbifold cover 
$$\pi:\calM_{\lambda}\rightarrow P,$$ where $\calM_{\lambda}$ is a manifold.
\end{prop}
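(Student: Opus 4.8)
The plan is to reduce everything to a single statement: the kernel $K=\ker\lambda$ acts \emph{freely} on $\mathbb{X}^n$. Once this is known, the rest is formal. By Poincar\'e's polytope theorem $G=G(P)$ acts properly discontinuously on $\mathbb{X}^n$ with $\overline{P}$ as a strict fundamental domain, so $\mathbb{X}^n/G$ is the reflection orbifold whose underlying space is $P$; the inclusion $K\leq G$ induces a normal orbifold cover $\pi\colon \mathbb{X}^n/K\to \mathbb{X}^n/G\cong P$ of degree $[G:K]=|\lambda(G)|$, and $\calM_\lambda=\mathbb{X}^n/K$ is a manifold precisely when $K$ acts freely. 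Moreover, since a nontrivial isometry of $\mathbb{H}^n$ (or $\mathbb{E}^n$) fixing an interior point is elliptic and hence of finite order, freeness of the $K$-action is equivalent to $K$ being \emph{torsion-free}. Thus the whole content of the proposition is the torsion-freeness of $K$.

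First I would recall the local structure of the action. A nontrivial torsion element $g\in G$ is elliptic, so by Cartan's fixed-point theorem it fixes a point $x\in\mathbb{X}^n$. Using that $K$ is normal, I may replace $g$ by a conjugate and assume $x\in\overline{P}$, lying in the relative interior of a face $f$ of codimension $k$. Because $P$ is right-angled, the $k$ facets $F_1,\dots,F_k$ containing $f$ are mutually orthogonal, and the stabiliser is the finite reflection group they generate, namely $G_x=\langle r_{F_1},\dots,r_{F_k}\rangle\cong(\mathbb{Z}/2\mathbb{Z})^k$, with the $r_{F_i}$ forming an $\mathbb{F}_2$-basis. Since $\lambda$ is $\mathbb{F}_2$-linear on $G_x$, sending $r_{F_i}\mapsto\lambda(F_i)$, the restriction $\lambda|_{G_x}$ is injective if and only if the colours $\lambda(F_1),\dots,\lambda(F_k)$ are linearly independent in $V$; and if it is injective then $G_x\cap K=\{1\}$, forcing $g=1$. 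So it remains to verify that the facets through \emph{every} real face of $P$ carry linearly independent colours.

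I would do this by reducing each face to one of the two configurations that the properness hypotheses control. A real vertex is necessarily a finite (simple) vertex with exactly $n$ facets meeting orthogonally, and independence here is exactly \emph{properness at vertices}. For a face $f$ of dimension $\geq 1$, its closure contains an edge $e$ (a standard property of the face lattice), and every facet through $f$ also passes through $e$, so $\calF$-colours through $f$ form a subset of those through $e$. If $e$ is an ideal edge, its $(n-1)$ colours are independent by \emph{properness at edges}; if $e$ is non-ideal then both endpoints are finite vertices, so the $(n-1)$ facets through $e$ lie among the $n$ facets through such a vertex and are independent by the vertex case. Either way the colours through $e$, and hence the subset through $f$, are linearly independent. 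This establishes injectivity of $\lambda|_{G_x}$ for every real point $x$, and therefore $K$ is torsion-free.

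The step needing the most care is this last reduction: confirming that the two stated properness conditions really do exhaust all real-point stabilisers. The subtlety is that a face may have real interior points while all of its vertices are ideal (an edge with two ideal endpoints being the basic example), so one cannot push every face down to a finite vertex; the precise role of \emph{properness at edges} is to govern exactly those faces that bottom out at ideal edges rather than at finite vertices. With torsion-freeness secured, $K$ acts freely and properly discontinuously by isometries on the complete space $\mathbb{X}^n$, so $\calM_\lambda=\mathbb{X}^n/K$ is a complete manifold and $\pi$ is the asserted orbifold cover.
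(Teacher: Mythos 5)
Your proof is correct and follows essentially the same route as the paper's: conjugate a torsion element so that it stabilises a face of $P$, observe that the stabiliser is the elementary abelian $2$-group generated by the reflections in the facets containing that face, and use properness to see that $\lambda$ is injective on this stabiliser. The only difference is that you spell out the reduction of arbitrary faces to simple vertices and ideal edges (via an edge in the closure of the face), a step the paper compresses into the phrase ``by the properness condition.''
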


\begin{proof}
Consider the tessellation of $\mathbb{H}^n$ obtained by reflecting the polytope $P$ in its facets. The group $G$ acts on $\mathbb{H}^n$ preserving such tessellation. A torsion element in $G$ necessarily fixes a face $F$ of the tessellation. Up to conjugation, we can suppose that $F$ is a face of $P$. The stabilizer of $F$ is generated by the reflections in the facets of $P$ which contain $F$. By the properness condition, we see that the stabilizer of $F$ is mapped injectively in $V$ by $\lambda$ and the proposition follows.
\end{proof}

Let us suppose that the homomorphism $\lambda$ is surjective. The automorphism group of the covering is isomorphic to $V$. To visualise the automorphism, consider the manifold $\calM_{\lambda}$, together with its tessellation into copies of the polytope $P$.  Consider any copy of $P_1$ of $P$ in the tessellation of $\calM_{\lambda}$. The polytope $P_1$ will be adjacent to another copy $P_2$ of $P$ along a facet $F$. The automorphism corresponding to $\lambda(F)$ permutes $P_1$ and $P_2$ by reflection in their common facet $F$.

Let $\Gamma_\lambda$ be a rooted graph defined as follows:
\begin{itemize}
\item the root of $\Gamma_\lambda$ is a vertex $p$ corresponding to a copy of $P$, and labelled with $\lambda(p) = \mathbf{0} \in V$,
\item if two copies $P_1$ and $P_2$ of $P$ in the tessellation of $\calM_{\lambda}$ corresponding to the vertices $p_1$ and $p_2$ of $\Gamma_\lambda$ are adjacent through a facet $F$, then the edge $\{p_1, p_2\}$ is labelled $\lambda(F)$,
\item if two copies $P_1$ and $P_2$ of $P$ in the tessellation of $\calM_{\lambda}$ are adjacent through a facet $F$, then the labels $\lambda(p_1), \lambda(p_2) \in V$ of the respective vertices $p_1$ and $p_2$ of $\Gamma_\lambda$ satisfy $\lambda(p_1) + \lambda(p_2) = \lambda(F)$. 
\end{itemize}
We call $\Gamma_\lambda$ the developing graph of $\lambda$. If a vertex $p$ of $\Gamma_\lambda$ has label $v$, its root can be shifted into $p$ by adding the vector $v$ to all the vertex labels and declaring $p$ the new root. Thus, the choice of the root and the vertex labelling is not canonical, while the choice of the edge labels is canonical. Therefore, the vertices of the graph  $\Gamma_\lambda$, can be interpreted as points of an affine space $\calA$ over $V$.

Now, let $F$ be a facet of $P$. The facet $F$ lifts to a finite collection of totally geodesic hyper-surfaces in $\calM_{\lambda}$. To the facet $F$, there corresponds a subgroup $W(F)\subset V$ generated by $\lambda(F)$ and the colours $\lambda(F^\prime)$ of all the facets $F^\prime$ of $P$ adjacent to $F$. 

\begin{prop}\label{prop:colourings-general}
The following properties hold:
\begin{enumerate} 
\item[(1)] The number of totally geodesic hyper-surfaces in $\calM_{\lambda}$ corresponding to the facet $F$ is equal to the index of $W(F)$ in $V$.
\item[(2)] Cutting along \emph{all} the hyper-surfaces associated with the facets $F_1\dots,F_n$ which share a common colour $\lambda(F_1)=\dots=\lambda(F_n)$ separates $\calM_{\lambda}$ if and only if the colour $\lambda(F_i)$ does not lie in the subspace $U$ generated by all other colours.
\item[(3)] The hyper-surfaces associated with a facet $F$ are two-sided if and only if the colour $\lambda(F)$ does not lie in the subspace generated by the colours of all the facets adjacent to $F$.
\item[(4)] If two hyper-surfaces $\calH_1$ and $\calH_2$ are lifts of two non-intersecting facets of $P$, then they do not intersect in the manifold $\calM_{\lambda}$.
\item[(5)] The manifold $M_{\lambda}$ is orientable if and only if, for some isomorphism $V\cong(\mathbb{Z}/2\mathbb{Z})^s$, each colour $\lambda(F_i)$ has an odd amount of 1's.
\end{enumerate}
\end{prop}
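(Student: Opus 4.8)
The plan is to work throughout in the combinatorial model furnished by the developing graph $\Gamma_\lambda$: the manifold $\calM_\lambda$ is tiled by copies $P_v$ of $P$ indexed by $v \in V$ (using surjectivity of $\lambda$), and $P_v$, $P_w$ are glued along a copy of a facet $F$ exactly when $w = v + \lambda(F)$. The single observation driving parts (1)--(3) is this: the copies of a fixed facet $F$ are in bijection with the cosets of $\langle\lambda(F)\rangle$ in $V$ (each such wall is shared by $P_v$ and $P_{v+\lambda(F)}$), and the totally geodesic hypersurface through such a wall continues, across a ridge $F \cap F'$, into the copy of $F$ lying between $P_{v+\lambda(F')}$ and $P_{v+\lambda(F')+\lambda(F)}$. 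In other words, crossing an adjacent facet $F'$ translates the wall-coset by $\lambda(F')$. I would establish this first.

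For (1), the connected components of the lift of $F$ are the orbits of the cosets $v + \langle\lambda(F)\rangle$ under the translations by $\lambda(F')$ with $F'$ adjacent to $F$; these orbits are exactly the cosets of $W(F)/\langle\lambda(F)\rangle$ in $V/\langle\lambda(F)\rangle$, whence their number is $[V : W(F)]$. For (2), cutting along all colour-$c$ walls (with $c = \lambda(F_1) = \dots = \lambda(F_n)$) deletes precisely the adjacencies that change the label by $c$, so $P_v$ and $P_w$ survive in one piece iff $v - w \in U$; the pieces are thus the cosets of $U$, and since surjectivity gives $V = U + \langle c\rangle$, one has $U = V$ iff $c \in U$, so the cut separates iff $c \notin U$. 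For (3), a transverse orientation of the lift of $F$ is a coherent choice of its two local sides, which by the driving observation is a function $\sigma$ with $\sigma(v + c) = \sigma(v) + 1$ and $\sigma(v + \lambda(F')) = \sigma(v)$ for every adjacent $F'$, i.e. a $\mathbb{Z}/2$-functional on $W(F)$ equal to $1$ on $c$ and $0$ on each adjacent colour; such a functional exists iff $c \notin \langle \lambda(F') : F' \text{ adjacent to } F\rangle$, the asserted criterion.

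Parts (4) and (5) are of a different flavour. For (4) I would simply use that the orbifold covering $\pi$ carries each copy $P_v$ isometrically onto $P$, hence sends the lift $\calH_i$ of $F_i$ onto $F_i$; a common point of $\calH_1$ and $\calH_2$ would then project into $F_1 \cap F_2 = \varnothing$. For (5), $\calM_\lambda = \mathbb{H}^n/\ker\lambda$ is orientable iff $\ker\lambda$ contains only orientation-preserving isometries, i.e. iff the parity homomorphism $\epsilon : G \to \mathbb{Z}/2$ sending every reflection to $1$ factors through $\lambda$. This happens iff there is a linear functional $\phi : V \to \mathbb{Z}/2$ with $\phi(\lambda(F)) = 1$ for all $F$; a short argument --- choosing a basis of $V$ inside the affine hyperplane $\phi^{-1}(1)$ --- then shows this is equivalent to the existence of a basis of $V$ in which every colour has an odd number of ones.

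The hard part will be the bookkeeping behind the driving observation and, most of all, part (3): I expect the delicate points to be verifying that the transverse orientation is governed by exactly that functional condition (equivalently, that a loop in the hypersurface reverses sides iff the adjacent colours it crosses sum to $c$), and that all components of a given lift share the same sidedness. Part (2) likewise requires care that cutting removes precisely the colour-$c$ adjacencies and nothing else, so that the pieces are genuinely the cosets of $U$.
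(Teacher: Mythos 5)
Your proposal is correct and follows essentially the same route as the paper: every part is handled through the developing graph and $\mathbb{Z}/2$-linear algebra, with (1), (2) and (4) matching the paper's arguments almost verbatim, (3) being the paper's tubular-neighbourhood connectivity argument repackaged as the existence of a side-function, and (5) supplying the standard argument that the paper outsources to \cite{KMT}. Your ``driving observation'' that crossing a ridge translates the wall-coset by the adjacent colour is precisely what underlies the paper's one-line claim that $\mathrm{Stab}(\widetilde{F})=W(F)$.
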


\begin{proof}
\textit{(1)} The number of lifts of a facet $F$ is the order of the orbit of a single lift $\widetilde{F}$ of $F$ to $\calM_{\lambda}$ under the action of the deck transformation group. It equals the index of $\mathrm{Stab}(\widetilde{F})$, the stabiliser of $\widetilde{F}$ under this action. Obviously, $\mathrm{Stab(\widetilde{F})} = W(F)$. 

\medskip

\textit{(2)} 
Let $\overline{\calM_\lambda} := \calM_\lambda // \{ \pi^{-1}(F_1), \dots, \pi^{-1}(F_n) \}$ be the manifold $\calM_\lambda$ cut along \textit{all} the lift of $F_i$'s. 
Remove from the developing graph for the colouring $\lambda$ all the edges with label $\lambda(F_i)$ to obtain the developing graph $\overline{\Gamma_{\lambda}}$ for the manifold 
$\overline{\calM_\lambda}$. It is clear that the graph $\overline{\Gamma_{\lambda}}$ is connected \emph{if and only if} the colour $\lambda(F_i)$ lies in the subspace generated by all other colours, therefore the same property holds for the manifold $\overline{\calM_\lambda}$.

\medskip

\textit{(3)}
Let $F_1,\dots,F_k$ be the faces of $P$ adjacent to a given facet $F$. Let us choose as a root for the developing graph $\Gamma_{\lambda}$ a vertex $v$ adjacent to an edge with label $\lambda(F)$, and consider the connected sub-graph $\Gamma_{\lambda}^F\subset \Gamma_{\lambda}$ given by the vertices of $\Gamma_{\lambda}$ which can be joined to the root $v$ by edges with labels $\lambda(F)$, or $\lambda(F_i)$, $i=1\dots k$.

The graph $\Gamma_{\lambda}^F$ is the developing graph of a small tubular neighbourhood $\mathrm{Tube}(\calH_F)$ of a lift $\calH_F$ of a facet $F$ in $\calM_\lambda$, where each vertex corresponds to the intersection of a copy of the polytope $P$ with the neighbourhood $\mathrm{Tube}(\calH_F)$.  The hyper-surface $\calH_F$ is two-sided in $\calM_{\lambda}$ if and only if  $\mathrm{Tube}(\calH_F)\setminus \calH_F$ is disconnected. By applying an argument analogous to that of (2), we see that this holds if and only if $\lambda(F)$ does not lie in the subspace generated by the colours $\lambda(F_i)$, $i=1\dots k$.

\medskip

\textit{(4)} Consider the manifold $\calM_{\lambda}$, together with its tessellation into copies of the polytope $P$. The hyper-surfaces associated with a facet $F$ of $P$ are naturally tessellated by copies of the facet $F$. If two hyper-surfaces $\calH_1$ and $\calH_2$ intersect, they do so in the boundary of some copy of $P$, and therefore correspond to two facets $F_1$, $F_2$ of $P$ such that $F_1\cap F_2\neq \emptyset$. 

\medskip

\textit{(5)} See \cite[Lemma~2.4]{KMT}.

\end{proof}

\begin{defn}
A coloring $\lambda: \calF(P) \rightarrow V$ is \emph{orientable} if it satisfies condition $(5)$ of Proposition \ref{prop:colourings-general}.
\end{defn}

\subsection{Equivalent colourings and coloured isometries}\label{section:colourings-isometries}

Given a colouring $\lambda:G\rightarrow V$ (from now on, we shall systematically denote the colouring and the associated homomorphism by the same letter), there are a number of equivalent colourings, that give rise to manifolds isometric to $\calM_{\lambda}$. We can compose the homomorphism $\lambda:G\rightarrow V$ with any injective linear automorphism $\phi\in GL(V)$ in order to obtain a new colouring, which we denote by $\phi_{*}(\lambda)$. Formally speaking, we change each colour $\lambda(F)\in V$ associated with a facet $F$ of $P$ to the colour $\phi(\lambda(F))$. It is clear that the kernel of the homomorphism $\phi\circ \lambda$ coincides with the kernel of $\lambda$, and therefore the manifolds $\calM_{\lambda}$ and $\calM_{\phi_{*}(\lambda)}$ are isometric. However this operation changes the correspondence between the group of automorphisms of the colouring, which is generated by reflections in the facets of $P$, and the vector space $V$.

Notice that every symmetry of the polytope $P$ induces a natural permutation of its facet colours.
\begin{defn}
Let $\lambda:G(P)\rightarrow V$ be a colouring. A symmetry $\psi$ of $P$ is \emph{admissible with respect to the colouring $\lambda$} if the naturally associated map on the facet colours is realised by a linear automorphism $\phi\in GL(V)$ of $V$. Admissible symmetries form a subgroup $\text{Adm}_{\lambda}(P)$ of $\text{Symm}(P)$, and there is a naturally defined homomorphism from $\text{Adm}_{\lambda}(P)$ to $GL(V)$. We denote by $GL(\lambda)$ the image of this homomorphism.
\end{defn}

Now let $\text{Adm}_{\lambda}(P) < \text{Symm}(P)$ be the group of admissible symmetries of $P$ with respect to a colouring $\lambda:G(P)\rightarrow V$. Each symmetry $\psi \in \text{Adm}_{\lambda}(P)$ lifts to an isometry of $\calM_{\lambda}$, and obviously any two lifts differ by composition with an element of $\text{Aut}\, \pi\cong V$. 

\begin{defn}
The group of lifts of admissible symmetries of $P$ to $\calM_{\lambda}$ is called the \emph{coloured isometry group} of $\calM_{\lambda}$, and is denoted by $\text{Isom}_c(\calM_{\lambda})$.
\end{defn}

There is a short exact sequence 
$$0\rightarrow \text{Aut}\, \pi \cong V\rightarrow \text{Isom}_c(\calM_{\lambda})\rightarrow \text{Adm}_{\lambda}(P)\rightarrow 0. $$

This sequence naturally splits: it is sufficient to choose a copy $P_0$ of $P$ in the tessellation of $\calM_{\lambda}$, and lift every $\psi \in \text{Adm}_\lambda(P)$ to the \emph{unique} element of $\text{Isom}_c(\calM_{\lambda})$ which maps $P_0$ to itself. Therefore we write

$$\text{Isom}_c(\calM_{\lambda})\cong V \rtimes \text{Adm}_{\lambda}(P),$$ where the action of  $\text{Adm}_\lambda(P)$ on $V$ is induced by the natural action of 
$GL(\lambda)$ on $V$.

Therefore, given a colouring $\lambda:\calF(P)\rightarrow V$, the group $\text{Isom}_c(\calM_{\lambda})$ acts on the developing graph $\Gamma_{\lambda}$ through affine isomorphisms of its vertex set $\calA$. The elements of $\text{Aut}\, \pi$ correspond to translations in $\calA$, while elements of $\text{Adm}_\lambda(P)$ induce linear maps in $GL(\lambda)$ of the underlying vector space $V$. 

Moreover, to each stratum $S$ in the cell decomposition of the manifold $\calM_{\lambda}$, there corresponds an affine subspace $\mathcal{W}_S\subset \calA$. Notice that each such $n$-stratum $S$ corresponds to a \emph{unique} $n$-face $F(S)$ of the polytope $P$. The elements of $\mathcal{W}_S$ correspond to the copies of the polytope $P$ which are adjacent to the stratum $S$. These form an affine subspace of $\cal{A}$ whose \emph{dimension} is equal to the \emph{co-dimension} of $S$. The underlying linear subspace of $V$ is trivial in the case of strata of co-dimension $0$ (copies of the polytope $P$), and, for strata corresponding to a co-dimension one face $F$, it coincides with the subspace generated by the colour $\lambda(F)$. In all other cases, the underlying linear subspace coincides with the subspace generated by the colours of the facets of $P$ adjacent to $F(S)$.

\begin{rem}
In order to check if an element $\psi\in\text{Isom}_c(\calM_{\lambda})$ acts on the manifold $\calM_{\lambda}$ without fixed points, it is sufficient to check if no affine subspace of the form $\mathcal{W}_S\subset \calA$ is preserved by $\psi$.
\end{rem}

\begin{rem}\label{remark:affine-subspaces}
Notice that the discussion above also applies to the cusps of $\calM_{\lambda}$. Each of these will be associated with an ideal vertex $v$ of the polytope $P$, and with each cusp we can associate an affine subspace of $\cal{A}$, generated by the colours carried over to the vertex figure of $v$ from the respective facets of $P$ adjacent to $v$ (i.e. a \textit{restriction} of $\lambda$ to the vertex figure of $v$).
\end{rem}

\section{Colourings of a three-dimensional cube}\label{section:colourings-cube}
In this section we discuss an example of colouring on the faces of a $3$-dimensional Euclidean cube, in order to show how to compute the topology and the geometry of the manifolds associated with such colourings. Moreover, we derive a criterion to determine the topology of a Euclidean manifold $\calM_{\lambda}$ associated with a colouring $\lambda$ on the faces of a cube in terms of the intersections of the vector spaces generated by the colours on opposite faces.

\begin{example}\label{ex:nontoric}
Here we exhibit an example of colouring $\lambda$ on the faces of a unit cube $C$ which produces a non-toric flat manifold $\mathcal{M}_\lambda$, namely, the one which which fibres over $S^1$ with a torus as fibre and monodromy given by a hyper-elliptic involution. This manifold can be described also as the Euclidean orientable $\mathbb{S}^1$-bundle over the Klein bottle $K$, and it is of topological type $\calG_2$ in Wolf's census of closed Euclidean $3$-manifolds \cite{Wolf}.

Consider a Euclidean unit cube with its three pairs of opposite square faces. We assign colours from $V=(\mathbb{Z}/2\mathbb{Z})^3$ to them in the following way (see Figure~\ref{fig:cube-colouring-example1}):
\begin{enumerate}
\item Assign the colour $(1,0,0)$ to a pair of opposite faces,
\item Assign the colour $(0,1,0)$ to the second pair of opposite faces,
\item Assign the colours $(0,0,1)$ and $(1,1,1)$ to the remaining two faces.
\end{enumerate}

\begin{figure}[h]
\centering
\includegraphics[scale=0.4]{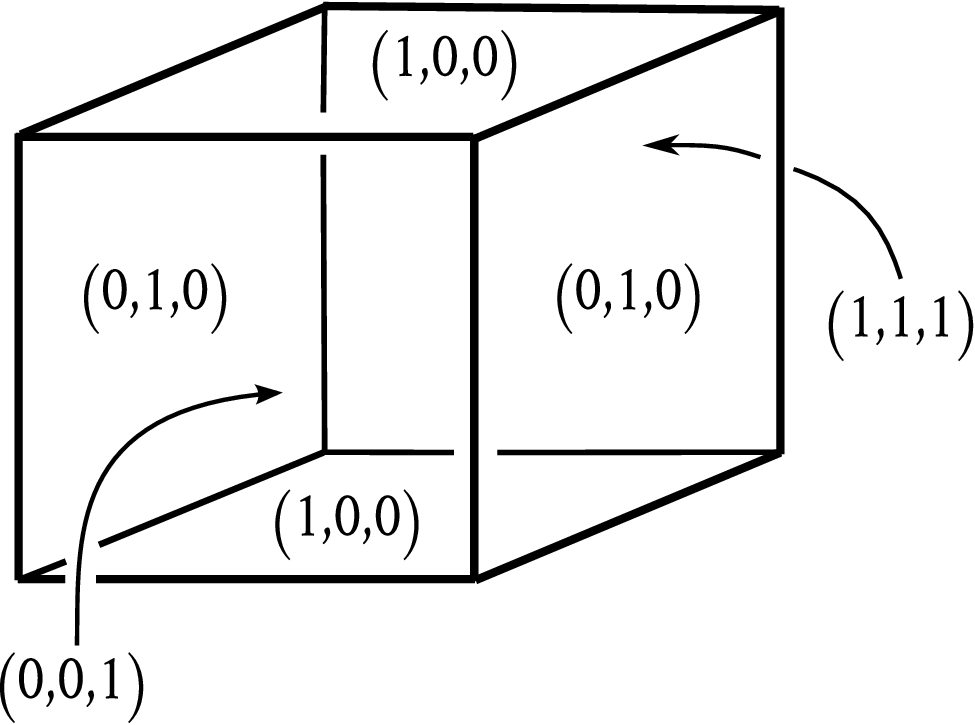}
\caption{The colouring $\lambda$ of the cube $C$, faces are labelled with vectors from $V$}
\label{fig:cube-colouring-example1}
\end{figure}

The third pair of opposite faces is parallel to the fibres of the fibration. We may check that the monodromy is a hyper-elliptic involution by performing the following computation. 

Let us start with a cube $C_0 = C$, labelled with $\mathbf{0} \in V$ and keep reflecting in its faces. We also translate the face colours from each copy of $C$ to its reflected counterpart accordingly. If two copies of $C$, say $C_1$ and $C_2$, share a face $S$, then their labels sum up to $\lambda(S)$. Once we reach another cube $C_N$ labelled $\mathbf{0}$, we determine an isometry $\psi \in \text{Isom}\,\mathbb{E}^3$ that brings $C_N$ back to $C_0$ respecting their face colourings. This isometry is exactly the composition of the chosen reflections, and it belongs to the group of automorphisms of the covering $\mathbb{E}^3\rightarrow \calM_{\lambda}$, since $C_0$ and $C_N$ correspond to the same vertex in the developing graph $\Gamma_\lambda$. 

We repeat this process until we are able to find three linearly independent isometries $\psi_i \in \text{Isom}\,\mathbb{E}^3$, $i=1,2,3$, which generate a manifold of the correct volume, namely the order, as a set, of the image of the colouring $\lambda$ in the vector space $V$, since these guarantees that the isometries $\psi_i$, $i=1,2,3$, generate the automorphism group of the covering $\mathbb{E}^3\rightarrow \calM_{\lambda}$.

\begin{figure}[h]
\centering
\includegraphics[scale=0.4]{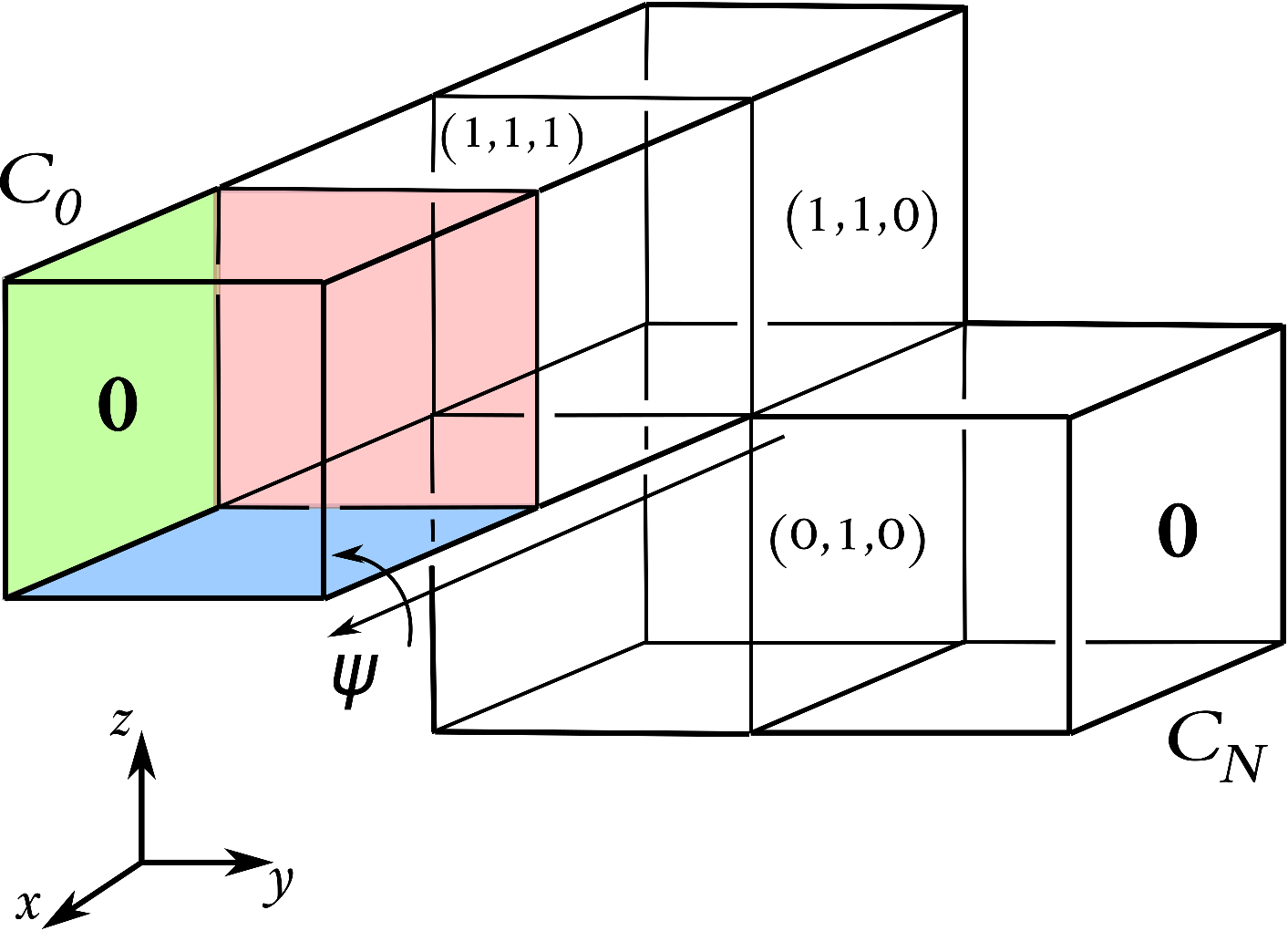}
\caption{Moving through copies of $C$ by reflections in their faces, from $C_0$ to $C_N$. The isometry $\psi$ is a translation + half-turn rotation that brings us back from $C_N$ to $C_0$}
\label{fig:cube-colouring-example2}
\end{figure}

In the case of the above described colouring of a cube (see Figure~\ref{fig:cube-colouring-example1}) we apply this process to obtain a sequence of five cubes depicted in Figure~\ref{fig:cube-colouring-example2}, where the first $C_0$ and the last $C_N = C_4$ cubes are labelled $\mathbf{0}$ and the isometry identifying them is 
\begin{equation*}
\psi_1((x,y,z)^T) = \left(\begin{array}{ccc}
 1& 0& 0\\
0& -1& 0\\
0& 0& -1
\end{array}\right)\, \left(\begin{array}{c}
x\\
y\\
z\\
\end{array}\right) +
\left(\begin{array}{c}
2\\
0\\
0
\end{array}\right)
\end{equation*}
Here, we start by reflecting the cubes in the $(1,1,1)$--face. Analogously, we obtain the respective isometries $\psi_2$ and $\psi_3$ by reflecting first in the $(0,1,0)$-- or $(1,0,0)$--face. We get
\begin{equation*}
\psi_2((x, y, z)^T) = (x, y + 2, z)^T,\,\,\, \psi_3((x, y, z)^T) = (x, y, z + 2)^T.
\end{equation*}
Then, the translation lattice of $\mathcal{M}_\lambda$ is generated by $\psi_i$'s, and its volume equals $\mathrm{Vol}\,\mathcal{M}_\lambda = 8$. Thus, the manifold $\mathcal{M}_\lambda$ has type $\mathcal{G}_2$ according to \cite{Wolf}. 
\end{example}

The geometry and topology of the manifold $\calM_{\lambda}$ are determined only by the equivalence class of the colouring ${\lambda}$. Therefore, understanding the topology of the cusp sections starting from a colouring of a cube, depends only on the equivalence class of the colouring. This allows us to describe the topology of the resulting cusp shape by understanding the intersections between the different subspaces generated by the colours of pairs of opposite faces of that cube.

Let us denote by $C$ the unit cube in the Euclidean $3$-space $\mathbb{E}^3=\{(x,y,z) \, | \, x,y,$ $z\in \mathbb{R}\}$, and let $G$ be the associated reflection group. The group $G$ is generated by reflections in the planes of the form $x=c,y=c,z=c$, with $c\in \mathbb{N}$, and is a right-angled Coxeter group. Reflections in parallel planes form three subgroups $G_x, G_y, G_z\subset G$.

An orientable, proper $V$-colouring $\lambda$ of the square faces of a unit cube $C$ defines three linear subspaces of $V$, that we call $V_x$, $V_y$, $V_z\subset V$, respectively, each generated by the colours assigned to a pair of opposite facets of $C$. The colouring also defines three homomorphisms $\lambda_x:G_x\rightarrow V_x$, $\lambda_y:G_y\rightarrow V_y$ and $\lambda_z:G_z\rightarrow V_z$.

In order to understand the topology of the manifold $\calM_{\lambda}$ obtained from the colouring $\lambda$ of the cube $C$, we shall consider the \textit{restriction} of $\lambda$ to a section of $C$ by the plane $x = c$, $0 < c < 1$. This section is a unit square $S$, whose edges inherit their colours from the respective faces of $C$. Indeed, each edge of $S$ arises at the intersection of the plane $x = c$ with a face $F$ of $C$, orthogonal to $x = c$. 

The restriction of $\lambda$ to $S$ is an orientable colouring, with values in the vector space of the form $V_y+V_z$, and, by the orientability assumption, it produces a torus $T_x$ with volume $2^{\,\text{dim}(V_y+V_z)}$. The vector space $V_y+V_z$ acts on the torus $T_x$ by automorphisms of the covering $T_x\rightarrow S$. Clearly, all sections of $C$ by the planes $x = c$, $0 < c < 1$, lift to tori in $\calM_\lambda$, which constitute the fibres of the fibration of $\calM_\lambda$ over the circle $\mathbb{S}^1$. 

We note, that in the exposition below the variables $x$, $y$ and $z$ are interchangeable: no direction along which we may fibre $\calM_\lambda$ is preferred. The intersection properties of the subspaces $V_x$, $V_y$ and $V_z$  that we are interested in remain intact under a permutation of $x$, $y$ and $z$. 

We shall determine the monodromy map $T_x \rightarrow T_x$ of the torus fibration over $\mathbb{S}^1$ that represents $\calM_\lambda$. Any orientation-preserving automorphism of $T_x$ is expressible as a sum of an \emph{even} number of face colours assigned to the faces parallel to the planes $y=0$ and $z=0$, and is necessarily isotopic to the identity or to a hyper-elliptic involution. The elements of $V_y+V_z$ that induce maps isotopic to the identity form a subgroup of $V_y+V_z$, which has index two in the group of orientation-preserving isometries of $T_x$. It consists of elements which are expressed as a sum of an even number of face colours in \emph{both} $V_y$ and $V_z$.

The torus $T_x$ is obtained as a quotient of $\mathbb{E}^2$ by translations along certain two vectors $v_1$ and $v_2$. By embedding $\mathbb{E}^2$ into $\mathbb{E}^3=\mathbb{R}\times\mathbb{E}^2$, we see that the vectors $(0,v_1),(0,v_2)$ belong to the translation lattice associated with the Euclidean $3$-manifold $\calM_{\lambda}$. We can add a third translation in $G_x$ along a vector of the form $v_3=(n,0)$, generated by reflections in the square faces parallel to $x=0$, in order to define a lattice $L$. The volume of $\mathbb{E}^3/L$ equals $$2^{\,\text{dim}\,(V_x)+\text{dim}\,(V_y+V_z)}.$$  

The length of the vector $v_3$ equals $2$ if the subspace $V_x$ is one-dimensional (in other words, if the colours of the respective pair of opposite facets are the same), and it equals $4$ if $V_x$ has dimension $2$ (in this case, the colours of opposite faces corresponding to $V_x$ are distinct). Notice, that the lattice defined above \emph{does not} necessarily coincide with the maximal translation lattice associated with the manifold $\calM_{\lambda}$.

If the intersection $V_x\cap(V_y+V_z)$ is trivial, the lattice $L$ is indeed the full group of isometries which defines the cover $\mathbb{E}^3\rightarrow \calM_{\lambda}$. To see this, it suffices to notice that $$\text{dim}\,V=\text{dim}\, (V_x+V_y+V_z)= \text{dim}(V_x)+\text{dim}(V_y+V_z).$$ This means that the torus defined by the lattice $L$ has the same volume as the manifold $\calM_{\lambda}$, and therefore the covering $\mathbb{E}^3/L\rightarrow \calM_{\lambda}$ is trivial.

If the intersection $V_x\cap (V_y+V_z)$ is non-trivial, we have to consider all possible cases separately.

\begin{enumerate}
\item[(1)] Suppose that $V_x$ has dimension $1$, and pick the unique vector $w\in  V_x\cap (V_y+V_z)$. Clearly, the reflection in any of the two square faces of the cube $C$ parallel to $x=0$ gives an isometry $\psi\in G_x$ of $\mathbb{E}^3$ such that $\lambda_x(\psi)=w$.

Moreover, since $w\in V_y + V_z = \text{Aut}(T_x\rightarrow S)$ we can find a (necessarily orientation-reversing) isometry $\phi$ of $\mathbb{E}^3$ (belonging to the group generated by $G_y$ and $G_z$), such that $\lambda_x(\phi)=w$. The isometry $\phi$ induces an index two covering $T_x\rightarrow K$, where $K$ is a Klein bottle. Moreover $$\phi\circ \psi\in \text{ker}\,\lambda=\pi_1(\calM_{\lambda}).$$

The quotient of $\mathbb{E}^3/L$ under this isometry is the manifold $\calM_{\lambda}$, since 
$$\text{dim}\,V=\text{dim}\, (V_x+V_y+V_z)= \text{dim}(V_x)+\text{dim}(V_y+V_z)-1$$ and $\text{Vol}\,  (\mathbb{E}^3/L)=2\cdot \text{Vol}\,( \calM_{\lambda})$.  The resulting manifold can be described as the orientable Euclidean $\mathbb{S}^1$-bundle over the Klein bottle $K$, which is also a torus bundle over $\mathbb{S}^1$, with a hyper-elliptic involution acting on its fibres.

\item[(2)] Now, suppose that $V_x$ has dimension $2$ (\textit{i.e.} the colours of the faces of $C$ parallel to $x=0$ are different). There are two possible cases:

\begin{itemize}
\item[(2.a)] If $V_x\cap (V_y+V_z)$ has dimension $1$, it contains a unique non-zero vector $w$. If $w$ is one of the face colours which generate $V_x$, then we can proceed as in the previous case and verify that the resulting manifold is a torus bundle over the circle, with the hyper-elliptic involution as monodromy map. 

If $w$ is the sum of the face colours which generate $V_x$, then the isometry $\psi\in G_x$ given by the translation along the vector $(0,0,2)$ (which is generated by reflections in the faces of $C$ that are parallel to $x=0$), satisfies $\lambda_x(\psi)=w$. As before, we can find an orientation-preserving isometry $\phi\in \langle G_y,G_z \rangle$ of $\mathbb{E}^3$ such that $\lambda_x(\phi)=w$. We need to check whether $\phi$ induces the identity or a hyper-elliptic involution on the torus $T_x$, \textit{i.e.} whether the vector $w$ is the sum of an even or an odd number of the face colours in $V_x$ and $V_z$. By proceeding as in the previous steps, we see that in the first case the resulting manifold is a torus. Otherwise, it is a torus bundle with a hyper-elliptic involution as monodromy. 

\item[(2.b)] If $V_x\subset (V_y+V_z)$, the resulting manifold is non-toric. We deduce, in analogy to (1), that it is a torus bundle with a hyper-elliptic involution as monodromy.
\end{itemize}
\end{enumerate}

We summarise the discussion above in the following way:

\begin{prop}\label{prop:cube-colourings-mflds}
Let $\lambda$ be a proper, orientable colouring on the faces of a Euclidean cube. The Euclidean manifold $\calM_{\lambda}$ is either a $3$-torus, or a torus bundle over $\mathbb{S}^1$ with monodromy given by a hyper-elliptic involution.

The manifold is a $3$-torus if and only if one of the following two conditions in fulfilled:

\begin{enumerate}
\item One of the vector spaces $V_x, V_y, V_z\subset V$ has trivial intersection with the sum of the other two.
\item One of the vector spaces $V_x, V_y, V_z\subset V$ (without loss of generality, we can suppose that it is $V_x$), has a one-dimensional intersection with the sum of the other two, and the unique non-trivial vector $w=w_y+w_z$, $w_y\in V_y, w_z\in V_z$, in this intersection is expressed as the sum of the two face colours that generate $V_x$, while the vectors $w_y$ and $w_z$ are expressed as the sum of an even number of each of the face colours that generate $V_y$ and $V_z$, respectively. 
\end{enumerate}

\end{prop}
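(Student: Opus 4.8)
The plan is to organise the case analysis carried out in the paragraphs preceding the statement into a clean dichotomy, taking as its backbone the torus fibration $\calM_\lambda \to \mathbb{S}^1$ and the classification of its monodromy. First I would record the dichotomy itself. Every section of the cube $C$ by a plane $x=c$, $0<c<1$, is a unit square $S$ whose coloured lift is a torus $T_x$, and these tori are the fibres of a fibration of $\calM_\lambda$ over $\mathbb{S}^1$. Since $\calM_\lambda$ is orientable, the monodromy is an orientation-preserving self-map of $T_x$, and it is induced by an isometry $\phi$ of the $(y,z)$-plane lying in $\langle G_y, G_z\rangle$. An orientation-preserving element of $\langle G_y, G_z\rangle$ is a product of an even number of reflections, hence is either a translation or a half-turn (a reflection in a $y$-line composed with a reflection in a $z$-line). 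Modulo the fibre lattice, a translation is isotopic to the identity while a half-turn is the hyper-elliptic involution. This already yields the first assertion: $\calM_\lambda$ is a $3$-torus when the monodromy is isotopic to the identity, and a torus bundle over $\mathbb{S}^1$ with hyper-elliptic monodromy otherwise.

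Next I would establish the torus criterion by comparing the lattice $L=\langle (0,v_1),(0,v_2),v_3\rangle$ with the full deck group $\ker\lambda$. The covering $\mathbb{E}^3/L \to \calM_\lambda$ has degree $2^{\dim(V_x\cap(V_y+V_z))}$, because $\mathrm{Vol}(\mathbb{E}^3/L)=2^{\dim V_x+\dim(V_y+V_z)}$ while $\mathrm{Vol}(\calM_\lambda)=2^{\dim(V_x+V_y+V_z)}$. Hence when $V_x\cap(V_y+V_z)=0$ the covering is trivial and $\calM_\lambda$ is a torus; by the interchangeability of the roles of $x,y,z$ noted above, this is exactly condition $(1)$. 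When the intersection is non-trivial I would split into the cases already isolated: if $\dim V_x=1$ then $V_x\subset V_y+V_z$ and the twist is a half-turn; if $\dim V_x=2$ with $V_x\subset V_y+V_z$ the same conclusion holds by analogy; and if $\dim V_x=2$ with a one-dimensional intersection spanned by a vector $w$, then when $w$ is one of the two generators of $V_x$ the twist is again a half-turn, whereas when $w$ is the sum of the two generators the outcome must be decided by examining $\phi$.

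The heart of the argument, and the step I expect to be the main obstacle, is precisely this last parity determination. Writing $w=w_y+w_z$ with $w_y\in V_y$ and $w_z\in V_z$, I would show that the compensating isometry $\phi\in\langle G_y, G_z\rangle$ with $\lambda_x(\phi)=w$ is a translation — so that the monodromy is isotopic to the identity and $\calM_\lambda$ is a $3$-torus — exactly when $\phi$ uses an even number of reflections in each of the $y$- and $z$-directions, i.e. when $w_y$ and $w_z$ are each a sum of an even number of the face colours generating $V_y$ and $V_z$; otherwise $\phi$ is a half-turn and $\calM_\lambda$ is a hyper-elliptic torus bundle. This is condition $(2)$. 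The delicate points here are to verify that the parity of the number of reflections used in each direction is a well-defined invariant of the colour $w$, independent of the particular expression chosen for $\phi$, and that the enumeration of sub-cases is genuinely exhaustive once one reduces to $V_x$ by symmetry. Collecting the outcomes, $\calM_\lambda$ is a $3$-torus in precisely the situations described by $(1)$ and $(2)$ and is a torus bundle with hyper-elliptic monodromy in all remaining cases, which completes the proof.
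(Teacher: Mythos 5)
Your proposal is correct and follows essentially the same route as the paper, which proves this proposition not in a separate proof environment but via the discussion immediately preceding it: the torus fibration by the sections $x=c$, the volume comparison between the lattice $L$ and $\ker\lambda$ to handle the case $V_x\cap(V_y+V_z)=0$, and the same case split on $\dim V_x$ and $\dim(V_x\cap(V_y+V_z))$ with the parity criterion deciding the remaining subcase. The ``delicate points'' you flag (well-definedness of the parity, exhaustiveness of the subcases) are likewise left implicit in the paper's own treatment.
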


\section{Colourings of the polytope $P^4$}\label{section:colourings-Potyagailo-Vinberg}

Here we shall discuss general properties of colourings of the right-angled hyperbolic polytope $P^4$ introduced by L. Potyaga\u{\i}lo and \`E. Vinberg in \cite{PV}, discuss an example and use it to build single cusped manifolds whose cusp section is either a $3$-torus, or an orientable $\mathbb{S}^1$-bundle over the Klein bottle. 

We begin by describing the combinatorial dual of the polytope $P^4$: the \emph{rectified $5$-cell}, which we denote by $R$. The polytope $R$ is obtained by considering the convex hull in $\mathbb{R}^5$ of the set of ten points obtained by permuting the coordinates of the vector $(1,1,1,0,0)$. Alternatively, one can obtain the rectified $5$-cell starting from a $4$-dimensional Euclidean simplex $S_4$, and considering the convex hull of the midpoints of its edges.

As a consequence of the latter construction for $R$, we see that there is the following sequence of one-to-one correspondences:
\begin{equation}
\{\text{Facets of}\; P^4\}\leftrightarrow \{\text{Vertices of}\; R\}\leftrightarrow\{\text{Edges of}\; S_4\} 
\end{equation}
and defining a colouring on the facets of $P^4$ is equivalent to defining a colouring on the edges of a $4$-simplex $S_4$, \textit{i.e.}\ an edge colouring of the complete graph $K_5$. 

The polytope $P^4$ has a hyperbolic realisation as a right-angled polytope with ten facets which are copies of the polyhedron $P^3$ \cite{PV}, depicted in Figure~\ref{fig:facets}. Each copy is labelled as a vertex of the rectified $5$-cell, \textit{i.e.} by a permutation of the coordinates of the vector $(1,1,1,0,0)$.
Moreover, the polytope $P^4$ has $10$ vertices in total. Of these, $5$ are ideal and the other $5$ are compact and lie inside the hyperbolic space. By marking the vertices of $S_4$ with the integers in the set $\{1,2,3,4,5\}$,  each ideal (resp. compact) vertex of $P^4$ receives a natural label: opposite to each ideal vertex, there is a compact vertex with the same label. To an edge of $S_4$ connecting vertices with labels $i$ and $j$, there corresponds a facet of $P^4$ whose label has $0$'s in the $i$-th and $j$-th entries, and $1$'s in all other entries. For instance, to the edge connecting the vertices $4$ and $5$ of $S_4$, there corresponds a facet of $P^4$ with label $(1,1,1,0,0)$.

\begin{figure}[ht]
\centering
\includegraphics[width=0.35\textwidth]{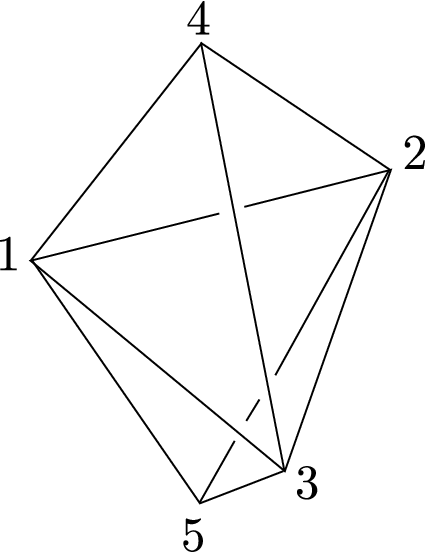}
\caption{The $P^3$-facet of $P^4$ labelled $(1,1,1,0,0)$, together with labels on its vertices. The vertices $1,2,3$ are ideal, while $4$ and $5$ lie in hyperbolic space.}\label{fig:facets}
\end{figure}

The vertex figure of an ideal vertex is a Euclidean cube. The hyperbolic volume of $P^4$ is equal to $(1/16)\cdot v_m$ \cite{ERT}, where $v_m=4\pi^2/3$ is the minimal volume of a hyperbolic $4$-manifold.

Observe that each facet of $P^4$ corresponds to an edge of the complete graph $K_5$, and that two edges of $K_5$ share a vertex if and only if the corresponding facets of $P^4$ are incident. Given an edge $e$ of $K_5$, there are exactly $3$ other edges which do not share a vertex with $e$, and these edges form a complete sub-graph $K_3\subset K_5$. As a consequence we see that, given a facet $F$ of $P^4$, there are exactly $3$ other (pairwise intersecting) facets of $P^4$ which are disjoint from $F$. 

In order to obtain a manifold, we need to make sure that the colouring of $P^4$ is proper at both the compact vertices and the ideal edges. This is equivalent to the following two conditions imposed on the edge colouring of the graph $K_5$:

\begin{enumerate}
\item \emph{Properness at vertices}: The colours associated with the edges emanating from a vertex $v$ of $K_5$ are linearly independent vectors. There are $5$ vertices to check.
\item \emph{Properness at edges}: The colours associated with every complete sub-graph $K_3 \subset K_5$ (built by removing any two vertices and the edges incident to these vertices) are linearly independent vectors. There are $10$ such sub-graphs to check. 
\end{enumerate}

\subsection{Induced colourings}
Now let $\mathcal{F}(P^4)$ be the set of facets of $P^4$, and $V$ a finite dimensional vector space over $\mathbb{Z}/2\mathbb{Z}$. Let us call $M_{\lambda}$ the manifold associated with a colouring $\lambda:\mathcal{F}(P^4)\rightarrow V$ satisfying the properness conditions. Notice that, by the condition on the volume of $P^4$, the dimension of $V$ has to be at least $4$. We want to study the induced colourings. 

For each edge $e$ of $K_5$, we have a sub-graph of $K_5$ (called a \emph{dart}) formed by the edge $e$ and all other edges which share a vertex with $e$. It has 7 edges in total, 2 vertices of valence 4, and 3 vertices of valence 2, as shown in Figure~\ref{fig:dart}. The edges of the dart associated with an edge $e$ correspond to the facets of $P_4$ which are adjacent to the facet associated with $e$.

Given an edge $e$ of $K_5$, the colouring on its corresponding dart graph naturally induces a $V_F$-colouring on the $P^3$-facet $F$ of $P^4$ corresponding to $e$, where $V_F=V/\langle \lambda(F) \rangle$, and therefore defines an embedded, totally geodesic sub-manifold of the manifold $M_{\lambda}$.

\begin{figure}[ht]
\centering
\includegraphics[width=0.27\textwidth]{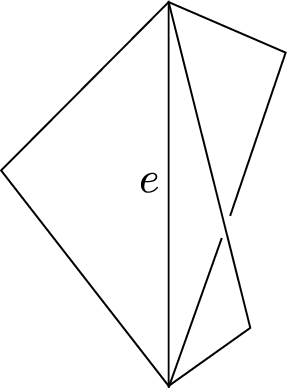}
\caption{The dart graph associated with an edge $e$ of $K_5$.}\label{fig:dart}
\end{figure}

Each ideal vertex $w$ of $P^4$ naturally corresponds to a sub-graph $K_4 \subset K_5$. The colouring on the edges of $K_4$, which correspond to the facets of $P^4$ that meet at $w$, naturally induces a colouring on the cubical vertex figure of $w$, with values in the vector space $V_w\subset V$ generated by the colours of the facets adjacent to $w$. The index of $V_w$ in $V$ determines the number of cusps that correspond to the vertex $w$.

\section{A highly symmetric example}\label{section:symmetric-example}

\subsection{Defining the colouring}\label{subsection:colouring-definition}

Consider a colouring of the complete graph $K_5$ with values in $V=(\mathbb{Z}/2\mathbb{Z})^5$ obtained in the following way:
\begin{enumerate}
\item Label the vertices of $K_5$ with the numbers $\{1,2,3,4,5\}$.
\item Assign to the edge connecting vertices $i$ and $j$ the element of $V$ which has $0$'s in the entries $i$ and $j$, and $1$'s in all other entries. For instance, the edge connecting vertices $1$ and $2$ will be coloured with $(0,0,1,1,1)$. 
\end{enumerate}

This edge colouring of $K_5$ defines a facet colouring $\lambda$ of the polytope $P^4$. It is easy to check that the properness conditions are satisfied, and therefore this colouring defines an orientable hyperbolic four-manifold $\mathcal{M} := \mathcal{M}_\lambda$, tessellated by $2^5 = 32$ copies of the polytope $P^4$. 

The group $\text{Adm}_\lambda(P^4)$ of admissible symmetries of $P^4$ coincides with its full isometry group $\text{Symm}(P^4)\cong \mathfrak{S}_5$, where the action on $V$ is given by permutation of the coordinates. Therefore we have a short exact sequence

\begin{equation} 
0\rightarrow V \rightarrow \mathrm{Isom}_c(\mathcal{M}) \rightarrow \mathfrak{S}_5\rightarrow 0 
\end{equation}\label{symmetries}

\subsection{Identifying hyper-surfaces}\label{subsection:hypersurfaces}

By applying Proposition \ref{prop:colourings-general}, we see that:

\begin{rem}
To each facet $F$ of $P^4$ there corresponds a \emph{unique}, non-separating, two-sided totally geodesic hyper-surface $\calH_{F}$ in $\mathcal{M}$.
\end{rem}

Moreover, since  the group of admissible symmetries of the colouring acts transitively on the facets of $P^4$, the hyper-surfaces  defined by the induced colourings on the facets are isometric to each other.

The colouring of the dart graphs associated with the edges having colours $(1,1,1,0,0)$ and $(0,0,1,1,1)$ are represented in Figure~\ref{fig:coloreddarts}.
\begin{figure}[ht]
\centering
\subfigure{
\includegraphics[width=0.35\textwidth]{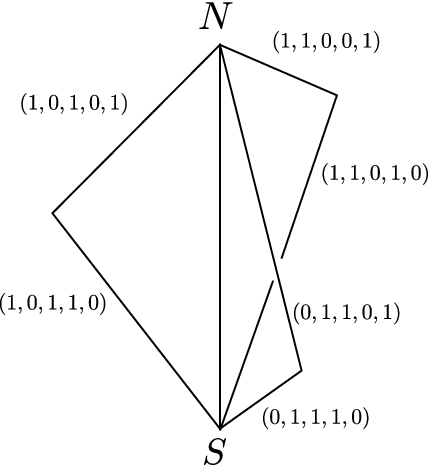}
}
\subfigure{
\includegraphics[width=0.35\textwidth]{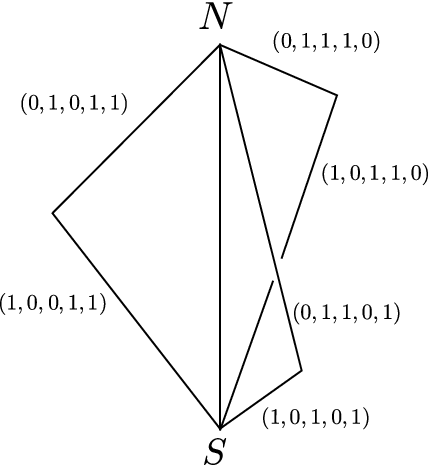}
}
\caption{The induced colouring on the dart graphs associated with the edges $(1,1,1,0,0)$ (left) and $(0,0,1,1,1)$ (right).}\label{fig:coloreddarts}
\end{figure}
The induced colouring $\lambda$ on a facet $F$ takes its values in the vector space $V_F = V / \langle \lambda(F)\rangle$, which is a vector space over $\mathbb{Z}/2\mathbb{Z}$ of dimension $4$. A basis for this vector space is given by the colours associated with the three edges which have a common endpoint at the vertex $S$ (resp.\ $N$), together with the colour associated with any other edge which has $N$ (resp.\ $S$) as a vertex, for instance $(1,1,0,1,0)$, $(1,0,1,1,0)$, $(0,1,1,1,0)$ and $(0,1,1,0,1)$.

Let us denote by $\mathcal{H}$ the hyperbolic $3$-manifold defined by the induced colouring $\lambda$ on the facets of $P^4$. Once again, the group of admissible symmetries $\text{Adm}_\lambda(F)$ coincides with the full symmetry group $\text{Symm}(F)\cong \mathbb{Z}/2\mathbb{Z}\times \mathfrak{S}_3$, and the action is given by permutation of the coordinates. 

We have a short exact sequence 

\begin{equation} 
0\rightarrow V' \rightarrow \mathrm{Isom}_c(\mathcal{H}) \rightarrow \mathrm{Symm}(F)\rightarrow 0,
\end{equation}\label{hypersurfaces}

where $\mathrm{Symm}(F)\cong \mathbb{Z}/2\mathbb{Z}\times \mathfrak{S}_3$ is the symmetry group of $F$. 
 


\subsection{Cusps}\label{subsection:cusps-mutations}

By a symmetry argument, it is clear that all the cusps of the manifold $\mathcal{M}$ are isometric to each other. The colours on each complete sub-graph $K_4\subset K_5$ generate a $4$-dimensional subspace $V''$ of $V=(\mathbb{Z}/2\mathbb{Z})^5$. Therefore, to each ideal vertex $v$ of $P^4$ there correspond \emph{two} cusps  of the manifold $\mathcal{M}$, for a total of ten cusps. Each cusp is tessellated by $2^4=16$ copies of a unit Euclidean cube.

Recall that the colours on the vertex figure of $P^4$ correspond to the colours associated with a complete sub-graph $K_4\subset K_5$. A basis for $V''$ is given by the colours associated with any set of three edges spanning a complete sub-graph $K_3\subset K_4$, together with the colour associated with one of the remaining edges. The choice of such a basis allows us to identify $V''$ with $(\mathbb{Z}/2\mathbb{Z})^4$. 

In Figure~\ref{fig:cuspcoloring}, we choose such a basis and represent the induced colouring directly on the cubical vertex figure of $P^4$. 

\begin{figure}[ht]
\centering
\includegraphics[width=0.6\textwidth]{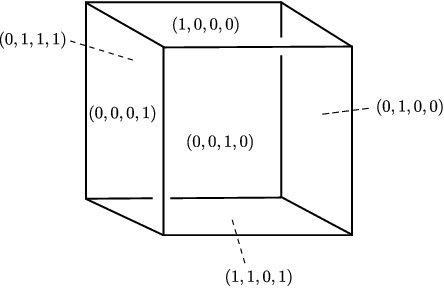}
\caption{The induced colouring on the cubical vertex figure of $P^4$, up to a suitable choice of a basis for the vector space $V'' \cong (\mathbb{Z}/2\mathbb{Z})^4$.}\label{fig:cuspcoloring}
\end{figure}

By applying Proposition \ref{prop:cube-colourings-mflds}, we conclude that the resulting cusps are all $3$-dimensional tori. An explicit computation, analogous to that of Example \ref{prop:cube-colourings-mflds} shows that the associated translation lattice is generated by translations along the vectors $(2,2,0)$, $(2,0,2)$ and $(0,2,2)$. 

To conclude, we discuss the cusp shapes of the hyper-surfaces associated with the facets of $P^4$, and how these intersect the cusps of $\mathcal{M}$.
Recall that each $P^3$-facet $F$ of the polytope $P^4$ has an induced colouring in the $4$-dimensional vector space $V'=V/\langle \lambda(F) \rangle$, where $\lambda(F)$ is the colour assigned to the facet $F$. The vertex figures of the polyhedron $P^3$ are unit Euclidean squares. It turns out that the induced $V'$--colouring on the edges of each square generate a subspace $V'''$ of $V'$ of dimension $3$. A basis for $V'''$ is given by the colours $c_1$, $c_2$, $c_3$ assigned to any three edges of $K_4 \subset K_5$ representing $F$, and the colour of the remaining edge is given by $c_1+c_2+c_3$, as in Figure~\ref{fig:squarecolors}. 

\begin{figure}[ht]
\centering
\includegraphics[width=0.5\textwidth]{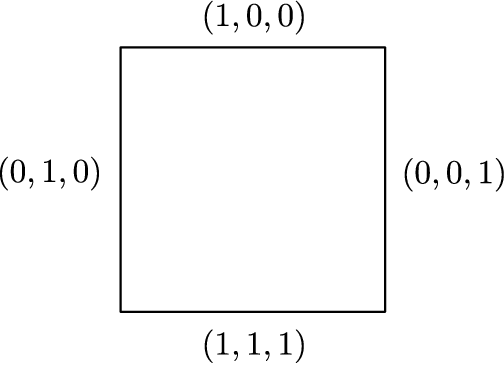}
\caption{The induced colouring on the square vertex figure of $P^3$, up to a suitable choice of a basis for the vector space $V''' \cong (\mathbb{Z}/2\mathbb{Z})^3$.}\label{fig:squarecolors}
\end{figure}

Again, to each ideal vertex of a facet $F\cong P^3$ there correspond two cusps in the hyper-surface $\mathcal{H}_F$ associated with $F$, for a total of six cusps. Each cusp is tessellated by $8$ unit squares. The Euclidean cusp shape is that of a flat torus $T$, generated by translations along the vectors $(2,2)$ and $(2,-2)$ represented in Figure~\ref{fig:cuspshape}.

\begin{figure}[htbp]
\centering
\includegraphics[width=0.45\textwidth]{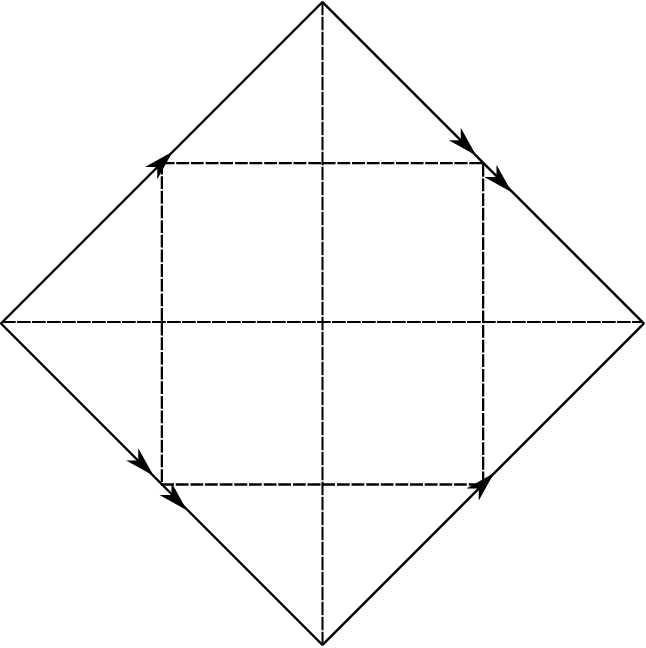}
\caption{The cusp shape of the hyper-surface $\mathcal{H}_F$, together with its tessellation by unit squares (dashed).}\label{fig:cuspshape}
\end{figure}

Notice that the cusp sections of the ambient four-manifold $\mathcal{M}$ can also be generated by translations along the vectors $(2,2,0)$, $(2,-2,0)$ and $(2,0,2)$. In other words, the cusp is obtained by identifying opposite faces of a slanted parallelepiped over a square base. Parallel copies of the base square correspond to cusp sections of the hyper-surface $\mathcal{H}_F$. Each cusp section of $\mathcal{M}$ is intersected by six cusp sections of the hyper-surfaces $\mathcal{H}_F$, $F \in \mathcal{F}(P^4)$, arranged in three parallel pairs, as shown in Figure~\ref{fig:cusptiling}.

\begin{figure}[htbp]
\centering
\includegraphics[width=0.6\textwidth]{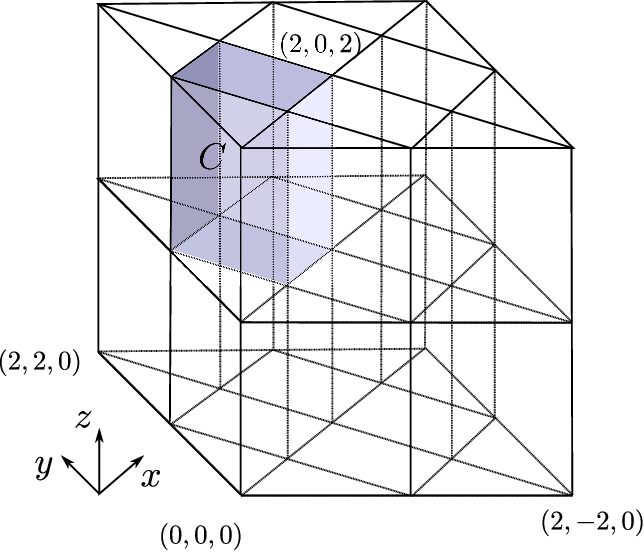}
\caption{A fundamental domain $\calD$ for the cusps of the manifold $\calM$, together with its tessellation into copies of the unit cube $C$ (shaded). The associated lattice is generated by translations along the vectors $(2,2,0)$, $(2,-2,0)$ and $(2,0,2)$. A cusp of a hyper-surface $\calH_F$ corresponds to the intersection of $\calD$ with an affine hyperplane of the form $z=c$, with $c$ integer. }\label{fig:cusptiling}
\end{figure}

It is useful to write down the equations for the linear subspaces which are naturally associated with the ideal vertices of $P^4$. Given such a vertex $v$, the labels on the facets which are adjacent to $v$ generate a co-dimension one linear subspace of $V$. Recall that each of these vertices is naturally labelled with an integer $i\in\{1,2,3,4,5\}$. The linear subspace associated to the vertex labelled $i$ is defined by the equation 
\begin{equation}\label{eq:cuspequation} 
\sum_{j\neq i} x_j=0.
\end{equation}

\section{Mutations}\label{section:mutations}
In this section, we shall show how to use mutations along hyper-surfaces in the manifold $\mathcal{M}$ in order to obtain new manifolds which are not isometric to the original one and have interesting cusp sections. Namely, we shall construct two singled cusped hyperbolic manifolds $\mathcal{X}$ and $\mathcal{Y}$ whose cusp sections are, respectively, an orientable $\mathbb{S}^1$-bundle over the Klein bottle and a three-torus. 

A \textit{mutation} consists in cutting $\calM$ open along a two-sided, totally geodesic hyper-surface $\calH$, choosing an isometry $\phi: \calH \longrightarrow \calH$ from one of the resulting totally geodesic boundary components to the other, and glueing back the two boundary components using the isometry $\phi$. Clearly, this operation can be performed more than once as long as we pick a collection of mutually disjoint hyper-surfaces.

In the case of the manifold $\calM$ constructed in Section \ref{section:symmetric-example}, we can pick at most two disjoint totally geodesic hyper-surfaces corresponding to the facets of the polytope $P^4$ since, by Proposition \ref{prop:colourings-general}, each such a facet $F$ lifts to a unique hyper-surface in the orbifold cover $\mathcal{M}\rightarrow P^4$, and there are at most two mutually disjoint facets in $P^4$.

It is useful to introduce labels on the cusps of $\mathcal{M}$. In order to do so, we choose one of the copies, named $P_0$, of the polytope $P^4$ that tessellates the manifold $\mathcal{M}$. We label its ideal vertices, which naturally correspond to distinct sub-graphs $K_4 \subset K_5$, with the numbers $\{1$, $2$, $3$, $4$, $5\}$. 

The ideal vertices of $P_0$ determine the five cusps of $\mathcal{M}$, which we label $(1,+),(2,+),\dots,(5,+)$. Each other cusp of $\mathcal{M}$ is the image of exactly one of the cusps  $(1,+),(2,+),\dots,(5,+)$ under the action of the group $V < \mathrm{Isom}_c(\mathcal{M})$, c.f. the short exact sequence (\ref{symmetries}). We label these remaining five cusps $(1,-),(2,-),\dots,(5,-)$, respectively.

\subsection{An orientable manifold with a single non-toric cusp}\label{subsection:non-toric-cusp}
Let us pick two disjoint hyper-surfaces $\mathcal{H}_1$ and $\mathcal{H}_2$ in $\calM$, for instance those associated with the facets $F_1$ and $F_2$ having colours $(1,1,1,0,0)$ and $(0,0,1,1,1)$.
To these two hyper-surfaces, there correspond a total of twelve square cusp sections (six to each). Each of the cusps of $\mathcal{H}_i$ intersects exactly one cusp section of $\mathcal{M}$. In particular, the cusps of $\mathcal{H}_1$ (resp. $\mathcal{H}_2$) lie in the cusps of $\calM$ with labels $(1\pm,2\pm,3\pm)$ (resp. $(3\pm,4\pm,5\pm)$). Each cusp section of the ambient manifold $\mathcal{M}$ intersects exactly one of the cusp sections of $\mathcal{H}_1 \cup \mathcal{H}_2$, with the exception of the cusps labelled $(3,+)$ and $(3,-)$, each of which intersects \textit{two} cusp sections of $\mathcal{H}_1 \cup \mathcal{H}_2$.

We shall use mutations along the hyper-surfaces $\mathcal{H}_i$ in order to build an orientable manifold $\calX$ with one non-toric cusp and volume $2\cdot v_m=8\pi^2/3$.
Let us cut the manifold $\mathcal{M}$ along $\mathcal{H}_1$ and $\mathcal{H}_2$ to produce a manifold $\overline{\mathcal{M}}$ with four totally geodesic boundary components $\mathcal{H}_i^{\pm}$, $i=1,2$.  The result of this operation is cutting open the cusp sections of $\mathcal{M}$ along the cusp sections of the hyper-surfaces $\calH_i$ that produces a total of twelve cusp sections. Each of these sections is a Euclidean manifold with totally geodesic boundary, homeomorphic to $T \times [0,1]$, where $T$ is the flat torus represented in Figure~\ref{fig:cuspshape}. 

Recall that we have chosen \emph{a priori} a root for the developing graph of the colouring, \emph{i.e.} a copy $P_0$ of the polytope $P^4$ in the tessellation of $\calM$. We label by $\calH_i^+$ the boundary components of $\overline{\calM}$ that intersects the polytope $P_0$ in the facet $F_i$, $i=1,2$, respectively, and by $\calH_i^-$ the other one.
The cusps $(1,\pm),(2,\pm),(4,\pm),(5,\pm)$, are cut open producing eight of the cusps of $\overline{\calM}$. The cusps coming from $(1,\pm)$ and $(2,\pm)$ (resp. $(4,\pm)$ and $(5,\pm)$) are bounded by the hyper-surface $\mathcal{H}_1^+$ (resp. $\mathcal{H}_2^+$) on one side and by $\mathcal{H}_1^-$ (resp. $\mathcal{H}_2^-$) on the other. These cusps are isometric to the product $T\times[0,2]$. 
The cusps of $\mathcal{M}$ labelled $(3,+)$ and $(3,-)$ are cut open to produce a total of four cusp sections, isometric to the product $T\times[0,1]$ labelled $(3a,\pm)$ and $(3b,\pm)$. Here, the letter $a$ corresponds to the cusp carried by $\calH_1$, and the letter $b$ corresponds to the cusp carried by $\calH_2$. We call these the \emph{short} cusps of $\overline{\calM}$. Each of them is bounded by $\calH^{+}_1$ (resp., $\calH^{-}_1$) on one side, and by $\calH^{+}_2$ (resp., $\calH^{-}_2$) on the other. All other cusps are referred to as \textit{long} ones. 

The $T$-fibres of the short cusps are naturally tessellated by copies of the unit square. Notice that the colouring assigned to the sides of this square is obtained by considering the restriction of the colouring on the cubic vertex figure of $P^4$ labelled $3$ to the four faces with colours different from $(1,1,1,0,0)$ and $(0,0,1,1,1)$, as in Figure~\ref{fig:squarecolor2}.   

\begin{figure}[ht]
\centering
\includegraphics[width=0.4\textwidth]{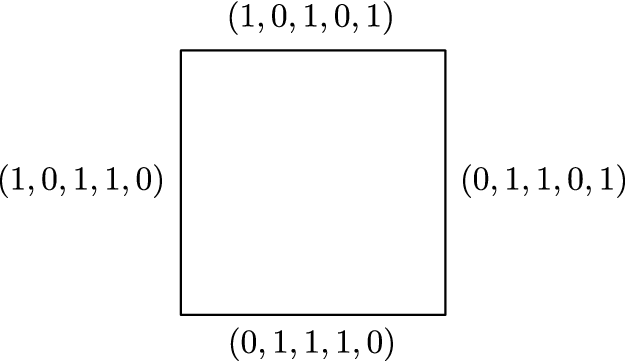}
\caption{The restricition of the $V$-colouring on the vertex figure of the ideal vertex labelled $3$ to a square section parallel to the hyper-surfaces $\calH_1$ and $\calH_2$}\label{fig:squarecolor2}
\end{figure}

These colours generate a co-dimension $2$ subspace $W$ of $V$, defined by the equations: 

\begin{equation}\label{eq:sliceequation}x_1+x_2+x_4+x_5=0,\;x_1+x_2+x_3=0. \end{equation}

Then the four affine subspaces of $\calA$ (see Section \ref{section:colourings-general}, Remark \ref{remark:affine-subspaces}) associated with the subspace $W$ correspond to the cusps labelled $(3a,\pm)$ and $(3b,\pm)$ of $\overline{\calM}$. They are given by the following equations:

\begin{itemize}
\item The cusp $(3a,+)$ corresponds to 
\begin{equation}\label{eq:sliceequation1}x_1+x_2+x_4+x_5=0,\;x_1+x_2+x_3=0. \end{equation}

\item The cusp $(3b,+)$ corresponds to 
\begin{equation}\label{eq:sliceequation2}x_1+x_2+x_4+x_5=0,\;x_1+x_2+x_3=1. \end{equation}

\item The cusp $(3a,-)$ corresponds to 
\begin{equation}\label{eq:sliceequation3}x_1+x_2+x_4+x_5=1,\;x_1+x_2+x_3=0. \end{equation}

\item The cusp $(3b,-)$ corresponds to 
\begin{equation}\label{eq:sliceequation4}x_1+x_2+x_4+x_5=1,\;x_1+x_2+x_3=1. \end{equation}
\end{itemize}
Notice that this labelling is coherent with the one expressed in \eqref{eq:cuspequation}: the cusps of $\calM$ with labels $(3.+)$ and $(3,-)$ correspond to different affine subspaces over $x_1+x_2+x_4+x_5=0$. 

An isometry of $\calM$ induced by summation with a vector $v$ from $V$ is a covering transformation, and restricts to an isometry acting on the lifts of each $F_i$. Since $\calH_i$ is the unique lift of $F_i$, then the isometry induced by $v$ restricts to an isometry of $\calH_i$. Thus, the orientation-reversing isometry $\alpha$ of $\calM$ defined by summation with the vector $(0,0,1,0,0)$ naturally induces an isometry of $\overline{\calM}$ which maps the cusp $(3a,\pm)$ to $(3b,\pm)$. The isometry $\beta:\calM\rightarrow\calM$ defined by summation with the vector $(1,1,0,1,0)$ induces an isometry of $\overline{\calM}$, mapping the cusp $(3a,+)$ to $(3a,-)$ and $(3b,+)$ to $(3b,-)$.

In order to understand which boundary components of $\overline{\calM}$ bound each of the short cusps, we notice that the isometry $\alpha$ is an orientation reversing isometry of $\calM$ which is orientation-preserving on the hyper-surfaces $\calH_i$. Indeed, $\alpha$ is induced by the vector
\begin{equation*}
(0,0,1,0,0) = \underbrace{(1,1,0,0,1)}_{v_1} + \underbrace{(1,1,0,1,0)}_{v_2} + \underbrace{(0,1,1,0,1)}_{v_3} + \underbrace{(1,0,1,1,0)}_{v_4} + \underbrace{(1,1,1,0,0)}_{w}, 
\end{equation*}
where $v_i$'s belong to the colours of the neighbours of $F_1$ (as shown in Figure~\ref{fig:coloreddarts} on the left), and $w$ is the colour of $F_1$ itself. Thus, in the induced colouring of $F_1$ we have a sum of four colours from $V\diagup \langle w \rangle$, and the isometry $\alpha$ of $\calH_1$ can be expressed as a composition of four reflections, which is orientation preserving on $\calH_1$. 

We can apply an analogous argument to show that $\alpha$ is orientation-preserving on $\calH_2$, as well. Here, we use the fact that 
\begin{equation*}
(0,0,1,0,0) = \underbrace{(1,0,0,1,1)}_{v_1} + \underbrace{(0,1,0,1,1)}_{v_2} + \underbrace{(1,0,1,1,0)}_{v_3} + \underbrace{(0,1,1,0,1)}_{v_4} + \underbrace{(0,0,1,1,1)}_{w}, 
\end{equation*}
where $v_i$'s belong to the colours of the neighbours of $F_2$ (as shown in Figure~\ref{fig:coloreddarts} on the right), and $w$ is the colour of $F_2$ itself.

Therefore, the induced isometry of $\overline{\calM}$ permutes the boundary components $\calH_i^+$ and $\calH_i^-$. 

The map $\beta$ instead, is an orientation-reversing isometry of $\calM$ which is orientation-reversing on $\calH_1$ and orientation-preserving on $\calH_2$. Indeed
\begin{equation*}
(1,1,0,1,0) = \underbrace{(1,0,0,1,1)}_{v_1} + \underbrace{(0,1,1,1,0)}_{v_2} + \underbrace{(0,0,1,1,1)}_{w}
\end{equation*}
where all the vectors on the right-hand side belong to the colours of the neighbours of $F_1$ (see Figure~\ref{fig:coloreddarts}, left), on one hand, and the $v_i$'s belong to the colours of the neighbours of $F_2$ (see Figure~\ref{fig:coloreddarts}, right), on the other, while $w$ is the colour of $F_2$ itself. 

The colour $w$ is the colour of $F_2$ itself. Therefore the induced map on $\overline{\calM}$ \emph{fixes} the boundary components $\calH_1^{\pm}$ and permutes $\calH_2^+$ and $\calH_2^-$. As a consequence, the short cusps of $\overline{\calM}$ have the following boundary components 
\begin{itemize}
\item The cusp $(3a,+)$ is bounded by $\calH_1^+$ on one side and by $\calH_2^+$ on the other.
\item The cusp $(3b,+)$ is bounded by $\calH_1^-$ on one side and by $\calH_2^-$ on the other.
\item The cusp $(3a,-)$ is bounded by $\calH_1^+$ on one side and by $\calH_2^-$ on the other.
\item The cusp $(3b,-)$ is bounded by $\calH_1^-$ on one side and by $\calH_2^+$ on the other.
\end{itemize}

Let us define a self-isometry of $\mathcal{H}_1$ that we shall use in our construction. Consider the short exact sequence (\ref{hypersurfaces}). Pick the isometry $\phi_1=v\circ\sigma_1$, where $\sigma_1\in \text{Symm}\,(P^3)$ exchanges the hyperbolic vertices labelled $4$ and $5$ and induces the cycle $(1,2,3)$ on the ideal vertices of Figure~\ref{fig:facets}, and $v=[(0,1,1,0,1)]\in V'$. The translation by $v$ corresponds to a reflection in the facet corresponding to the complete sub-graph $K_3 \subset K_5$ spanned by the vertices labelled $2$, $3$, and $4$ in Figure~\ref{fig:facets}.

Similarly, let us define a self-isometry of $\mathcal{H}_2$ by $\phi_2=\sigma_2$, where $\sigma_2 \in \text{Symm}(P^3)$ fixes the hyperbolic vertices labelled $1$ and $2$, and induces the cycle $(3,4,5)$ on the ideal ones. Notice that both $\phi_1$ and $\phi_2$ are orientation-preserving isometries of $\mathcal{H}_1$ and $\mathcal{H}_2$, respectively. Moreover, the maps $\phi_1$ and $\phi_2$ induce the following cycles on the cusps of $\mathcal{M}$:

\begin{equation}\label{eq:cycle1}\phi_1: (1,+)\rightarrow (2,+) \rightarrow (3,+)\rightarrow (1,-)\rightarrow (2,-)\rightarrow (3,-) \rightarrow (1,+);\end{equation}
\begin{equation}\label{eq:cycle2}\phi_2:(3,\pm)\rightarrow (4,\pm)\rightarrow (5,\pm)\rightarrow (3\pm).\end{equation}

We glue isometrically in pairs the boundary components of $\overline{\mathcal{M}}$ using the induced maps $\phi_i:\mathcal{H}_i^+\rightarrow \mathcal{H}_i^-$ to produce a manifold $\calX$. 
 
\begin{prop}\label{prop:onecusp}
The manifold $\calX$ is an orientable, finite-volume, single-cusped hyperbolic manifold. The cusp shape is a torus-bundle over $\mathbb{S}^1$, with the monodromy map given by a hyper-elliptic involution.
\end{prop}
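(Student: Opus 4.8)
The plan is to establish Proposition \ref{prop:onecusp} in three stages: first verify that $\calX$ is a genuine orientable hyperbolic manifold, then count cusps to show there is exactly one, and finally identify its cross-section. For the first stage, I would observe that the gluing maps $\phi_i:\calH_i^+\to\calH_i^-$ are isometries of totally geodesic boundary components, so by the standard cut-and-paste for hyperbolic manifolds, $\calX$ inherits a complete finite-volume hyperbolic structure with $\Vol\calX=\Vol\calM=2\cdot v_m$. Orientability requires checking that the gluing is orientation-compatible: since $\calM$ is orientable, each boundary component $\calH_i^{\pm}$ inherits a boundary orientation, and I must confirm that $\phi_i$ reverses the induced co-orientation (equivalently, that $\phi_i$ composed with the boundary identification is orientation-reversing on the $\calH_i$), which the text has already arranged by choosing $\phi_1,\phi_2$ to be orientation-preserving isometries of the hyper-surfaces.

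The heart of the argument is the cusp count, and here I would work entirely combinatorially with the cusp labels. Cutting $\calM$ along $\calH_1\cup\calH_2$ produces twelve cusp sections: eight \emph{long} cusps of shape $T\times[0,2]$ coming from the cusps $(i,\pm)$ with $i\in\{1,2,4,5\}$, and four \emph{short} cusps $(3a,\pm),(3b,\pm)$ of shape $T\times[0,1]$, each bounded on both sides by hyper-surface components according to the list established above. The gluing maps $\phi_1,\phi_2$ act on these cusp sections through the cycles \eqref{eq:cycle1} and \eqref{eq:cycle2}, and the crucial point is that identifying $\calH_i^+$ with $\calH_i^-$ via $\phi_i$ glues cusp sections end-to-end along their torus boundaries. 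I would trace the orbit of a single cusp section through the permutation generated by $\phi_1$ and $\phi_2$ acting on the boundary-labelled cusps, keeping careful track of which boundary torus of each section gets matched to which. The claim is that this permutation is a single long cycle through all twelve sections, so that they assemble into one connected cusp; the bookkeeping from the boundary-component list for the short cusps together with the cycles \eqref{eq:cycle1}, \eqref{eq:cycle2} is exactly what closes this up.

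For the cusp shape, I would then reconstruct the monodromy of the resulting cusp as a torus bundle over $\mathbb{S}^1$. As the twelve $T\times I$ pieces are concatenated around the cycle, each gluing $\phi_i$ restricts to a self-isometry of the torus fibre $T$, and the total monodromy is the composition of these fibre maps around the full loop. I would appeal to the cube-colouring analysis of Section \ref{section:colourings-cube}, in particular Proposition \ref{prop:cube-colourings-mflds}: the relevant slice data is the subspace $W$ of \eqref{eq:sliceequation} and the way $\alpha$ (summation by $(0,0,1,0,0)$) and $\beta$ (summation by $(1,1,0,1,0)$) act, which were shown above to be orientation-reversing on $\calM$ but to realise the identity-or-hyper-elliptic dichotomy on the $T$-fibres. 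Computing the net effect on $H_1(T)$ and checking it is conjugate to $-\mathrm{Id}$ (a hyper-elliptic involution) rather than the identity shows the bundle is non-trivial; equivalently I would read off from the accumulated face-colour sums around the cycle whether the total monodromy lies in the index-two identity-isotopic subgroup of orientation-preserving isometries of $T$.

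The main obstacle is the cusp-count bookkeeping in the second stage: one must correctly compose the two partial permutations \eqref{eq:cycle1} and \eqref{eq:cycle2} together with the orientation behaviour of $\alpha,\beta$ encoded in the short-cusp boundary list, since an error in matching $\calH_i^+$-sides to $\calH_i^-$-sides would falsely split the cusp into several components or miscompute the monodromy. The orientability and volume claims are essentially formal once the gluing maps are known to be orientation-preserving on the hyper-surfaces, and the shape identification reduces cleanly to the already-proved Proposition \ref{prop:cube-colourings-mflds}; it is the single-cusp connectivity, traced through all twelve sections, that demands the most care.
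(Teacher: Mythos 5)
Your proposal is correct and follows essentially the same route as the paper: trace the twelve cusp pieces of $\overline{\calM}$ through the permutation induced by $\phi_1,\phi_2$ (using the $\calH_i^{\pm}$-boundary bookkeeping for the short cusps) to get a single cycle, then compute the total monodromy on the torus fibre and decide identity versus hyper-elliptic involution from the accumulated colour sums. The only difference is presentational: the paper carries out the monodromy computation explicitly in the affine coordinates on $\calA$ via the maps $\widetilde{\phi_i}$ and the reflections $R_i$ (rather than by citing Proposition \ref{prop:cube-colourings-mflds}, which does not literally apply to a cusp assembled by mutation), but the underlying criterion is the one you describe.
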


\begin{proof}
We prove that the cusp sections of $\overline{\mathcal{M}}$ are glued together along their boundaries as in Figure~\ref{fig:localcusp} to produce a unique cycle of maximal length:

\begin{align*}\label{eq:cuspcycle}
(3a,+)\xrightarrow{\phi_1}(1,-)\xrightarrow{\phi_1}(2,-)\xrightarrow{\phi_1} (3b,-)\xrightarrow{\phi_2}(4,-)\xrightarrow{\phi_2}(5,-)\xrightarrow{\phi_2}(3a,-)\xrightarrow{\phi_1}(1,+)\xrightarrow{\phi_1}\dots\\\dots\xrightarrow{\phi_1} (2,+)\xrightarrow{\phi_1}(3b,+)\xrightarrow{\phi_2^{-1}}(5,+)\xrightarrow{\phi_2^{-1}}(4,+)\xrightarrow{\phi_2^{-1}}(3a,+).
\end{align*} 
corresponding to the \emph{unique} cusp $\calC$ of the manifold $\calX$.

\begin{figure}[ht]
\centering
\includegraphics[width=0.75\textwidth]{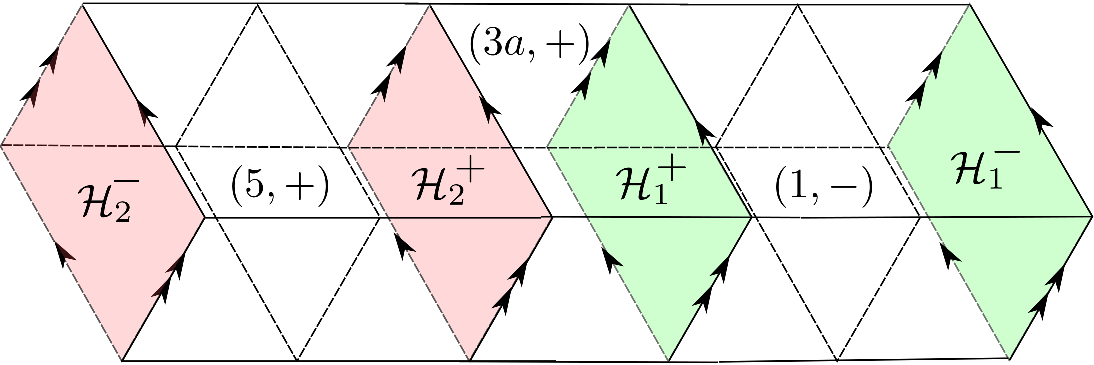}
\caption{The local cusp structure of the manifold \calX, as a result of cusp sections glueing in $\overline{\calM}$. Coloured regions represent intersections with the hyper-surfaces $\calH_1$ and $\calH_2$, and are each isometric to the torus $T$ in Figure~\ref{fig:cuspshape}. Notice that the cusps with labels $(3a,\pm)$ and $(3b,\pm)$ are shorter than others, and are bounded by different hyper-surfaces on the sides. This property is crucial, because it allows us to  ``mix'' the cycles induced on the cusps by the maps $\phi_1$ and $\phi_2$.}\label{fig:localcusp}
\end{figure}

It is clear \emph{a priori} that, no matter how many cusps the manifold $\calX$ has, the corresponding cusp sections decompose into a union of $20$ copies of the product $T\times[0,1]$, glued along their boundaries in cycles to form a certain amount of closed Euclidean manifolds. Each such a cycle corresponds to a single cusp of $\calX$, and the shape of each cusp section is a torus bundle over $\mathbb{S}^1$. In order to compute the cycles, we choose a copy of $T\times[0,1]$, and perform a sequence of reflections in parallel copies of the torus $T$ of the form $T\times\{0\}$ and $T\times\{1\}$ at unit distance one from the other. All these reflections are induced by coloured isometries of the manifold $\calM$. In fact, there are two kinds of toric fibres in the cusp $\calC$: those which separate different cusps of the manifold $\overline{\calM}$, and the central slices of the form $T\times\{1\}$ of the long cusps with labels $(1\pm),(2,\pm),(4\pm),(5,\pm)$. Each of the latter cusps is isometric to $T\times[0,2]$.

Each of the maps $\phi_i:\calH_i\rightarrow \calH_i$ admits a unique orientation-reversing extension $\widetilde{\phi_i}$ to an isometry in $\text{Isom}_c(M)\cong V\rtimes \mathfrak{S}_5$ as follows:

\begin{equation}\label{eq:extension}
\widetilde{\phi_1}:\left(\begin{array}{c}
a\\
b\\
c\\
d\\
e\\
\end{array}\right)\rightarrow
\left(\begin{array}{c}
c+1\\
a\\
b\\
e\\
d+1\\
\end{array}\right),\;
\widetilde{\phi_2}:\left(\begin{array}{c}
a\\
b\\
c\\
d\\
e\\
\end{array}\right)\rightarrow
\left(\begin{array}{c}
a\\
b\\
e+1\\
c+1\\
d+1\\
\end{array}\right)
\end{equation}
Here we use the affine coordinates on the vertex set $\calA$ of the developing graph $\Gamma(\calM)$. These isometries of $\calM$ induce reflections along the copies of $T$ which separate different cusps of $\overline{\calM}$. 

The corresponding reflections along the central sections of the long cusps of $\overline{\calM}$ labelled $i\in \{1,2,4,5\}$, can be expressed by the maps $R_i$ defined, in the affine coordinates on $\mathcal{A}$, as follows:

\begin{align}\label{eq:reflections}
R_1:\left(\begin{array}{c}
a\\
b\\
c\\
d\\
e\\
\end{array}\right)\rightarrow
\left(\begin{array}{c}
a+1\\
b\\
c\\
d+1\\
e+1\\
\end{array}\right),&\;\;
R_2:\left(\begin{array}{c}
a\\
b\\
c\\
d\\
e\\
\end{array}\right)\rightarrow
\left(\begin{array}{c}
a\\
b+1\\
c\\
d+1\\
e+1\\
\end{array}\right)\\
R_4:\left(\begin{array}{c}
a\\
b\\
c\\
d\\
e\\
\end{array}\right)\rightarrow
\left(\begin{array}{c}
a+1\\
b+1\\
c\\
d+1\\
e\\
\end{array}\right),&\;\;
R_5:\left(\begin{array}{c}
a\\
b\\
c\\
d\\
e\\
\end{array}\right)\rightarrow
\left(\begin{array}{c}
a+1\\
b+1\\
c\\
d\\
e+1\\
\end{array}\right)
\end{align}
Notice that the map $R_i$ is simply a reflection in the hyper-surface which intersect the cusps of $\calM$ labelled $i \in \{1, 2, 4, 5\}$ and is parallel to $\calH_1\cup \calH_2$. To each copy of $T\times [0, 2]$, there corresponds a co-dimension two affine subspace of $\calA$, determined by the eight copies of the polytope $P^4$ that tessellate the former. 

Since each reflection acts on the affine space $\calA$, the cycles induced on the $20$ copies of $T\times [0,1]$ naturally correspond to the cycles induced on the corresponding affine subspaces. Moreover, to each cycle $\calC$ there corresponds naturally an element $\phi_{\calC}$ in the coloured isometry group of the torus $T$, which can be described by the following short exact sequence:
\begin{equation}\label{eq:isometries-torus} 
0\rightarrow W\rightarrow \text{Isom}_c(T)\rightarrow D_4\rightarrow 0.
\end{equation}
The isometry defined by $\phi_{\calC}$  corresponds to the monodromy of the fibration $$0\rightarrow T\rightarrow \calC\rightarrow \mathbb{S}^1\rightarrow 0.$$ 
 
We define the isometries $\psi_1, \psi_2:\calM \rightarrow \calM$ as follows:

\begin{align}\label{eq:psi1}
\psi_1=\widetilde{\phi_1}\circ R_2\circ\widetilde{\phi_1}\circ R_1\circ\widetilde{\phi_1}\\
\psi_2=\widetilde{\phi_2}\circ R_5\circ\widetilde{\phi_2}\circ R_4\circ\widetilde{\phi_2}
\end{align}

By substituting the respective affine expressions for each factor in the composition of maps given by (\ref{eq:extension}) -- (\ref{eq:reflections}), we represent $\psi_1$ and $\psi_2$ as
\begin{equation}\label{eq:psiaffine}
\psi_1:\left(\begin{array}{c}
a\\
b\\
c\\
d\\
e\\
\end{array}\right)\rightarrow
\left(\begin{array}{c}
a+1\\
b+1\\
c+1\\
e+1\\
d\\
\end{array}\right),\;
\psi_2:\left(\begin{array}{c}
a\\
b\\
c\\
d\\
e\\
\end{array}\right)\rightarrow
\left(\begin{array}{c}
a\\
b\\
c+1\\
d+1\\
e+1\\
\end{array}\right)
\end{equation}

Let us choose a cusp of $\overline{\calM}$, \textit{e.g.} labelled $(3a,+)$. The map $\psi_1$ corresponds to a composition of five subsequent reflections along parallel copies of the torus $T$ in the cusp $C$ of $\calX$, where the first reflection is along the boundary component of $(3a,+)$ corresponding to the hyper-surface $\calH_1^+$. Note that $\psi_1$ maps the cusp $(3a,+)$ to $(3b,-)$, and it does so by sending $\calH_1^+$ to $\calH_1^-$ and \emph{fixing} $\calH_2^+$. 

Now we apply the map $\psi_2$, \emph{i.e.} we perform other five subsequent reflections, starting from the boundary component of $(3b,-)$ corresponding to $\calH_2^+$. This maps $(3b,-)$ to $(3a,-)$, $\calH_1^-$ to $\calH_1^+$ and $\calH_2^-$ to $\calH_2^+$. Again, we apply $\psi_1$, mapping $(3a,-)$ to $(3b,+)$, $\calH_1^+$ to $\calH_1^-$ and fixing $\calH_2^-$. 

This time, we arrive to a cusp which is bounded by $\calH_2^-$ on one side. Therefore we need to consider the pairing map $\phi_2^{-1}: \calH_2^-\rightarrow \calH_2^+$ to start with. Thus, the last five reflection correspond to applying $\psi_2^{-1}$ in order to return back to $(3a,+)$ and close up the cycle.

The monodromy map $\phi:T\rightarrow T$ can therefore be expressed as the following composition of maps in $\text{Aut}_c(M)$:

\begin{equation}\label{eq:composition}
\phi = \psi_2^{-1}\circ\psi_1\circ\psi_2\circ\psi_1 
\end{equation}

By substituting the corresponding expressions in the affine coordinates from (\ref{eq:psiaffine}), we get

\begin{equation}\label{eq:monodromy1}
\phi:\left(\begin{array}{c}
a\\
b\\
c\\
d\\
e\\
\end{array}\right)\rightarrow
\left(\begin{array}{c}
a\\
b\\
c\\
d+1\\
e+1\\
\end{array}\right).
\end{equation}

As expected, this affine map fixes the affine space $W$ associated with the slice $T$, defined by formula \eqref{eq:sliceequation}. In fact, in the affine coordinates, $\phi$ is a translation by vector $(0,0,0,1,1)$. Notice that this vector is expressed as a sum of the colours assigned to the sides of the square in Figure~\ref{fig:squarecolor2} adjacent to its upper left vertex. Therefore, the resulting monodromy map is realised by a composition of two reflections along two orthogonal axes: it is a hyper-elliptic involution of the torus $T$.  The lattice associated with the cusp $\calC$ is generated by the translations along the vectors $(2,2,0)$ and $(2,-2,0)$ together with the Euclidean isometry $$\psi: (x,y,z)\rightarrow(-x,-y,z+20).$$ 
\end{proof} 

\subsection{An orientable manifold of small volume with a single toric cusp}\label{subsection:toric-cusp}

Let us step back for a moment, and look once more at the symmetric manifold $\mathcal{M}$ defined in Section \ref{subsection:colouring-definition}. There is an orientation-preserving involution $i$, exchanging the hyper-surfaces $\mathcal{H}_1$ and $\mathcal{H}_2$ associated with the facets having colours $(1,1,1,0,0)$ and $(0,0,1,1,1)$. In terms of the short exact sequence (\ref{symmetries}), it is given by $i=\sigma$, where $\sigma=(1,4)(2,5)\in \mathfrak{S}_5$.

Again, we use mutations along the hyper-surfaces $\calH_1$ and $\calH_2$ in order to pair all the cusps of $\overline{\calM}$. For the hyper-surface $\calH_1$, we choose the isometry $\phi_1:\mathcal{H}_1^+\rightarrow \mathcal{H}_1^-$ introduced in Section \ref{subsection:non-toric-cusp}. For the hyper-surface $\calH_2$, we notice that the involution $i$ sends $\calH_1^{+}$ (resp., $\calH_1^{-}$) to $\calH_2^{+}$ (resp., $\calH_2^{-}$), and we conjugate the map $\phi_1$ by the involution $i$. Finally, the mutations $\phi_1$ and $\phi_2$ produce a hyperbolic manifold which we denote by $\calY$.

\begin{prop}\label{prop:single-toric-cusp}
The manifold $\calY$ is an orientable finite-volume hyperbolic $4$-manifold with a single cusp, whose cross-section is a three-dimensional torus. 
\end{prop}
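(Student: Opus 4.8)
The plan is to run the very same machine used for Proposition \ref{prop:onecusp}, exploiting that $\calY$ is assembled from the \emph{same} cut-open manifold $\overline{\calM}$, with the \emph{same} pairing $\phi_1$ on $\calH_1$, and only the pairing on $\calH_2$ replaced by $\phi_2 = i\,\phi_1\,i^{-1}$, where $i$ is the involution induced by $\sigma=(1,4)(2,5)\in\mathfrak{S}_5$. Two of the assertions are then immediate. Since $\overline{\calM}$ is a finite-volume hyperbolic $4$-manifold with totally geodesic boundary and gluing its boundary components by isometries changes neither the volume ($\Vol\,\calY=\Vol\,\calM=2v_m$) nor completeness, $\calY$ is a complete finite-volume hyperbolic $4$-manifold. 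Orientability follows because the extension $\widetilde{\phi_2}=i\,\widetilde{\phi_1}\,i$ is, like $\widetilde{\phi_1}$, orientation-reversing on $\calM$ (it is the conjugate of an orientation-reversing isometry by the orientation-preserving involution $i$), which is exactly the condition making the mutated manifold orientable, as in the construction of $\calX$.

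First I would pin down the number of cusps by tracing the cusp cycle, exactly as in Proposition \ref{prop:onecusp}. The twelve cusp sections of $\overline{\calM}$ (the long cusps $(1,\pm),(2,\pm),(4,\pm),(5,\pm)$ and the short cusps $(3a,\pm),(3b,\pm)$) are glued along their torus boundaries by $\phi_1$ on the $\calH_1^{\pm}$ faces and by $\phi_2$ on the $\calH_2^{\pm}$ faces. Reading off the induced permutations — $\phi_1$ cycling the $\calH_1$-cusps as $(1,+)\to(2,+)\to(3,+)\to(1,-)\to(2,-)\to(3,-)\to(1,+)$, and $\phi_2=i\phi_1 i^{-1}$ cycling the $\calH_2$-cusps as $(4,+)\to(5,+)\to(3,+)\to(4,-)\to(5,-)\to(3,-)\to(4,+)$ — while keeping track of which boundary component ($\calH_i^{+}$ or $\calH_i^{-}$, and the $a/b$ splitting in region $3$) bounds each piece, I expect to obtain the single closed cycle
\begin{gather*}
(3a,+)\to(1,-)\to(2,-)\to(3b,-)\to(4,+)\to(5,+)\to(3b,+)\\
\to(2,+)\to(1,+)\to(3a,-)\to(5,-)\to(4,-)\to(3a,+),
\end{gather*}
which visits all twelve cusp sections. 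Hence $\calY$ has exactly one cusp $\calC$.

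It then remains to identify the cross-section of $\calC$, i.e.\ to compute the monodromy of the resulting torus bundle over $\mathbb{S}^1$. Following the $\calX$ computation I would use the orientation-reversing extensions $\widetilde{\phi_1},\widetilde{\phi_2}\in\mathrm{Isom}_c(\calM)$ and the reflections $R_1,R_2,R_4,R_5$ along the central slices of the long cusps, and form $\psi_1=\widetilde{\phi_1}\circ R_2\circ\widetilde{\phi_1}\circ R_1\circ\widetilde{\phi_1}$ and $\psi_2=\widetilde{\phi_2}\circ R_5\circ\widetilde{\phi_2}\circ R_4\circ\widetilde{\phi_2}$. The key simplification is that the symmetry $i$ conjugates the data of the first mutation into that of the second: in affine coordinates on $\calA$ one checks $i\,R_1\,i=R_4$, $i\,R_2\,i=R_5$ and $\widetilde{\phi_2}=i\,\widetilde{\phi_1}\,i$, whence $\psi_2=i\,\psi_1\,i$. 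Reading the cycle above, the third and fourth stretches are traversed backwards, so (unlike the $\calX$ cycle) the monodromy is the commutator-type composition $\phi=\psi_2^{-1}\circ\psi_1^{-1}\circ\psi_2\circ\psi_1$. Substituting the affine expressions $\psi_1(a,b,c,d,e)=(a+1,b+1,c+1,e+1,d)$ and $\psi_2=i\psi_1 i$, namely $\psi_2(a,b,c,d,e)=(b+1,a,c+1,d+1,e+1)$, a direct calculation shows that $\psi_1$ and $\psi_2$ commute, so $\phi=\mathrm{id}$. The monodromy is therefore trivial, the bundle is a product, and the cross-section of $\calC$ is a $3$-torus.

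The main obstacle is the bookkeeping in the middle step rather than any geometric difficulty: one must correctly match each short cusp $(3a,\pm),(3b,\pm)$ to the side ($\calH_i^+$ or $\calH_i^-$) bounding it, and assign the correct forward/backward pairing ($\widetilde{\phi_i}$ versus $\widetilde{\phi_i}^{-1}$) at each transition, since an error there would corrupt both the cycle and the composition defining $\phi$. The conceptual payoff that makes the computation clean — and explains why $\calY$ is toric while $\calX$ is not — is the identity $\psi_2=i\,\psi_1\,i$ coming from the extra symmetry $i$ exchanging $\calH_1$ and $\calH_2$: it forces $\psi_1$ and $\psi_2$ to commute, collapsing the commutator monodromy to the identity.
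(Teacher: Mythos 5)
Your proposal is correct and follows essentially the same route as the paper: trace the cusp cycle of $\overline{\calM}$ under the pairings $\phi_1$ and $\phi_2=i\,\phi_1\,i^{-1}$ to see there is a single cusp, then compute the monodromy $\phi=\psi_2^{-1}\circ\psi_1^{-1}\circ\psi_2\circ\psi_1$ in affine coordinates and find it is the identity. Your explicit observation that $\psi_2=i\,\psi_1\,i$ forces $\psi_1$ and $\psi_2$ to commute is a nice conceptual sharpening of the paper's direct substitution (and your cycle's placement of the short-cusp labels $(3a,-),(3b,-)$ is in fact the one consistent with the boundary bookkeeping set up in the proof for $\calX$).
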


\begin{proof}
In order to determine the homeomorphism type of the cusp of $\calY$, we follow the same procedure as in the proof of Proposition \ref{prop:onecusp}. The affine expression for the orientation-reversing lift $\widetilde{\phi_1}$ of $\phi_1$ is the same as before, and is given in \eqref{eq:extension}. The affine expression for the orientation-reversing lift of the second mutation $\phi_2$ is

\begin{equation}\label{eq:newaffine}
\widetilde{\phi}_2:\left(\begin{array}{c}
a\\
b\\
c\\
d\\
e\\
\end{array}\right)\rightarrow
\left(\begin{array}{c}
b\\
a+1\\
e\\
c+1\\
d\\
\end{array}\right).
\end{equation}

One can check that these pairing maps arrange the cusps of $\overline{\calM}$ in a single cycle:
\begin{align}\label{eq:cuspcycle2}
(3a,+)\xrightarrow{\phi_1}(1,-)\xrightarrow{\phi_1}(2,-)\xrightarrow{\phi_1} (3a,-)\xrightarrow{\phi_2}(4,+)\xrightarrow{\phi_2}(5,+)\xrightarrow{\phi_2}(3b,+)\xrightarrow{\phi_1^{-1}}(2,+)\xrightarrow{\phi_1^{-1}}\dots\\\dots\xrightarrow{\phi_1^{-1}} (1,+)\xrightarrow{\phi_1^{-1}}(3b,-)\xrightarrow{\phi_2^{-1}}(5,-)\xrightarrow{\phi_2^{-1}}(4,-)\xrightarrow{\phi_2^{-1}}(3a,+).
\end{align}

Let us define the maps $\psi_1$ and $\psi_2$ as the following compositions:
\begin{align}\label{eq:psi2new}
\psi_1=\widetilde{\phi}_1\circ R_2 \circ \widetilde{\phi}_1 \circ R_1\circ \widetilde{\phi}_1,\\
\psi_2=\widetilde{\phi}_2\circ R_5 \circ \widetilde{\phi}_2 \circ R_4\circ \widetilde{\phi}_2,
\end{align} 
The affine expression for $\psi_1$ coincides with the one given by formula \eqref{eq:psiaffine} in Section \ref{subsection:non-toric-cusp}. The affine expression for $\psi_2$ is given below
\begin{equation}\label{eq:affinepsi2new}
\psi_2:\left(\begin{array}{c}
a\\
b\\
c\\
d\\
e\\
\end{array}\right)\rightarrow
\left(\begin{array}{c}
b+1\\
a\\
c+1\\
d+1\\
e+1\\
\end{array}\right).
\end{equation}

Then the monodromy map $\phi$ of the cusp section of $\calY$ equals 
\begin{equation}\label{eq:newcomposition}
\phi = \psi_2^{-1}\circ\psi_1^{-1}\circ\psi_2\circ\psi_1.
\end{equation}

By substituting the affine expressions for $\psi_1$ and $\psi_2$ given in \eqref{eq:psiaffine} and \eqref{eq:affinepsi2new}, we see that $\phi$ is actually the identity map of the torus $T$.
As a consequence, the manifold $\calY$ has a single toric cusp $\calC$. The Euclidean cusp shape is determined by translations along the vectors $(2,2,0)$, $(2,-2,0)$ and $(0,0,20)$.
\end{proof}

\begin{rem}
The examples above answer affirmatively, in the orientable case, to Question 4.15 from \cite{KS2014}.
\end{rem}

\bigskip

\begin{flushleft}
\textit{Alexander Kolpakov\\
Department of Mathematics\\
40 St. George Street\\
Toronto ON\\
M5S 2E4 Canada\\}
\texttt{kolpakov dot alexander at gmail dot com}
\end{flushleft}

\medskip

\begin{flushleft}
\textit{Leone Slavich\\
Dipartimento di Matematica \\
Largo Bruno Pontecorvo 5\\
56127 Pisa, Italy\\}
\texttt{leone dot slavich at gmail dot com}
\end{flushleft}
 
\end{document}